\theoremstyle{plain}
\newtheorem{prop}{Proposition}
\newtheorem{theo}[prop]{Theorem}
\newtheorem{lemm}[prop]{Lemma}
\theoremstyle{remark}
\theoremstyle{definition}
\numberwithin{equation}{section}
\newcommand{\A}{{\mathbb A}}
\newcommand{\PP}{{\mathbb P}}
\newcommand{\Q}{{\mathbb Q}}
\newcommand{\G}{{\mathbb G}}
\newcommand{\Z}{{\mathbb Z}}
\newcommand{\eqto}{\stackrel{\lower1.5pt\hbox{$\scriptstyle\sim\,$}}\to}
\DeclareMathOperator{\Gal}{Gal}
\DeclareMathOperator{\inv}{inv}
\DeclareMathOperator{\Pic}{Pic}
\DeclareMathOperator{\Br}{Br}
\DeclareMathOperator{\chara}{char}
\begin{document}
\title[Brauer--Manin obstructions on surfaces]
{Effectivity of Brauer--Manin obstructions on surfaces}
\author{Andrew Kresch}
\address{
  Institut f\"ur Mathematik,
  Universit\"at Z\"urich,
  Winterthurerstrasse 190,
  CH-8057 Z\"urich, Switzerland
}
\email{andrew.kresch@math.uzh.ch}
\author{Yuri Tschinkel}
\address{
  Courant Institute,
  251 Mercer Street,
  New York, NY 10012, USA
}
\email{tschinkel@cims.nyu.edu}

\date{May 24, 2010}
\subjclass[2000]{14G25 (primary); 14F22 (secondary).}
\begin{abstract}
We study Brauer--Manin obstructions to the Hasse principle and to
weak approximation on algebraic surfaces over number fields.
\end{abstract}
\maketitle

\section{Introduction}
\label{sec:introduction}

Let $X$ be a smooth projective variety over a number field $k$.
An important area of research concerns the behavior of the set of $k$-rational
points $X(k)$.
One of the major open problems is the decidability problem for
$X(k)\ne\emptyset$.
An obvious necessary condition is the existence of points over all
completions $k_v$ of $k$; this can be effectively tested given
defining equations of $X$.
One says that $X$ \emph{satisfies the Hasse principle} when
\begin{equation}
\label{eqn:hasseprinciple}
X(k)\ne \emptyset \Leftrightarrow X(k_v)\ne \emptyset \,\,\,\forall v.
\end{equation}

One well-studied obstruction to this is the
\emph{Brauer--Manin obstruction} \cite{manin}.
It has proved remarkably useful in explaining counterexamples to the
Hasse principle, especially on curves \cite{stoll}
and geometrically rational surfaces \cite{CTSSD};
see also \cite{skorobook}.
Although there are counterexamples not explained by the Brauer--Manin obstruction
\cite{skoro}, \cite{poonen}, there remains a wide class of algebraic varieties
for which the sufficiency of the Brauer--Manin obstruction is a subject
of active research.
This includes $K3$ surfaces, studied for instance in
\cite{SD00},
\cite{wittenberg},
\cite{harariskorobogatov},
\cite{skorobogatovswinnertondyer},
\cite{Bri06},
\cite{ieronymou},
\cite{HVV}.

We recall, that an element $\alpha\in \Br(X)$ cuts out a subspace
$$X(\A_k)^\alpha\subseteq X(\A_k)$$
of the adelic space, defined as the set of all $(x_v)\in X(\A_k)$ satisfying
$$\sum_v \inv_v(\alpha(x_v))=0.$$
Here, $\inv_v$ is the local invariant of the restriction of $\alpha$ to a
$k_v$-point, taking its value in $\Q/\Z$.
By the exact sequence of class field theory
$$0\to \Br(k)\to \bigoplus_v \Br(k_v)\stackrel{\inv}\to \Q/\Z\to 0$$
(here $\inv$ is the sum of $\inv_v$),
we have
$$X(k)\subseteq X(\A_k)^\alpha.$$
Therefore, for any subset $\mathrm{B}\subseteq \Br(X)$ we have
$$X(k)\subseteq X(\A_k)^{\mathrm{B}}:=\bigcap_{\alpha\in \mathrm{B}}X(\A_k)^\alpha.$$

A natural goal is to be able to compute the space $X(\A_k)^{\Br(X)}$
\emph{effectively}.
By this we mean, to give an algorithm, for which there is an \emph{a priori}
bound on the running time, in terms of the input data (e.g., the defining equations
of $X$).
The existence of such an effective algorithm
was proved for geometrically rational surfaces in \cite{KTeff}.
Here we prove the following result.

\begin{theo}
\label{thm.main}
Let $X$ be a smooth projective geometrically irreducible surface over a number field $k$,
given by a system of homogeneous polynomial equations.
Assume that
the geometric Picard group $\Pic(X_{\bar k})$ is torsion free and
generated by finitely many divisors, each with a given set of
defining equations.
Then for each positive integer $n$ there exists an effective
description of a space $X_n\subseteq X(\A_k)$ which satisfies
$$
X(\A_k)^{\Br(X)}\subseteq X_n \subseteq X(\A_k)^{\Br(X)[n]},
$$
where $\Br(X)[n]\subseteq\Br(X)$ denotes the $n$-torsion subgroup.
In particular, $X(\A_k)^{\Br(X)}$ is effectively computable provided that
$|\Br(X)/\Br(k)|$ can be bounded effectively.
\end{theo}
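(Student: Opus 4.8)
The plan is, given $n$, to construct a finite set $B\subseteq\Br(X)$ of explicitly presented Brauer classes (Azumaya algebras, or classes in $H^2(X,\mu_n)$) whose subgroup, together with the image of $\Br(k)$, contains $\Br(X)[n]$, and then to take $X_n:=X(\A_k)^B$. Classes coming from $\Br(k)$ impose no condition on adelic points, so $B\subseteq\Br(X)$ forces $X(\A_k)^{\Br(X)}\subseteq X_n$, while containment of $\Br(X)[n]$ in $\langle B\rangle+\Br(k)$ forces $X_n\subseteq X(\A_k)^{\Br(X)[n]}$. Once $B$ is in hand, the set $X_n$ is described effectively by the local analysis carried out for geometrically rational surfaces in \cite{KTeff}: for each $\alpha\in B$ one produces a finite set $S$ of places of $k$ outside of which $\inv_v(\alpha(x_v))=0$ for all $x_v\in X(k_v)$, and for $v\in S$ one partitions the compact space $X(k_v)$ into finitely many effectively described pieces on which $x_v\mapsto\inv_v(\alpha(x_v))$ is constant; intersecting over $\alpha\in B$ and over all places gives $X_n$. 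For the final assertion one may assume $X(\A_k)\ne\emptyset$ (otherwise all the sets involved are empty, which the finitely many local conditions detect); then, taking $n=2\,M!$ with $M$ the given bound on $|\Br(X)/\Br(k)|$, so that $n$ is even and kills $\Br(X)/\Br(k)$, one checks that $\Br(X)=\Br(X)[n]+\Br(k)$ --- the only delicate point, at the real places, is handled by the observation that $(n\alpha)(x_v)=0$ for every local point $x_v$ at a real place --- whence $X_n=X(\A_k)^{\Br(X)}$.

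The construction of $B$ splits according to the exact sequence $0\to\Br_1(X)\to\Br(X)\to\Br(X_{\bar k})$, where $\Br_1(X):=\ker(\Br(X)\to\Br(X_{\bar k}))$. For the algebraic classes one must first put the input into usable form: from the defining equations of the given generators of $\Pic(X_{\bar k})$, compute a finite Galois extension $K/k$ over which all of them are defined and which splits $\Pic(X_{\bar k})$, together with the action of $\Gal(K/k)$ on $\Pic(X_{\bar k})\cong\Z^{\rho}$. This is effective because testing linear equivalence of two divisors on $X$, and exhibiting the connecting rational function when they are equivalent, reduces to computing spaces of global sections of line bundles, that is, to linear algebra over the homogeneous coordinate ring. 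The low-degree exact sequence of the Hochschild--Serre spectral sequence, together with $H^3(k,\G_m)=0$ for the number field $k$, gives $\Br_1(X)/\Br(k)\cong H^1(\Gal(K/k),\Pic(X_{\bar k}))$, a finite group computed by ordinary group cohomology; following \cite{KTeff}, one realizes a generating set of it by explicit Azumaya algebras on $X$, which form the algebraic part of $B$.

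The transcendental part is the main obstacle. One needs explicit representatives for a generating set of $(\Br(X)/\Br_1(X))[n]$, a subgroup of the finite group $\Br(X_{\bar k})[n]$. Torsion-freeness of $\Pic(X_{\bar k})$ is decisive here: it forces $H^1(X_{\bar k},\mu_n)=0$, so the Hochschild--Serre spectral sequence for $\mu_n$ degenerates to the exact sequence
\[0\to\Br(k)[n]\to H^2(X,\mu_n)\to H^2(X_{\bar k},\mu_n)^{\Gal(\bar k/k)}\to 0,\]
while the Kummer sequence gives $\Br(X)[n]=H^2(X,\mu_n)/\im(\Pic(X)/n)$; consequently every $\Gal(\bar k/k)$-invariant class of $\Br(X_{\bar k})[n]$ lying in the image of $\Br(X)[n]$ is the image of an invariant class in $H^2(X_{\bar k},\mu_n)$, which then lifts to $H^2(X,\mu_n)$. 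The finite group $H^2(X_{\bar k},\mu_n)$, hence $\Br(X_{\bar k})[n]$, is computable from the equations of $X$ via Artin's comparison theorem, which identifies it as an abstract group with the singular cohomology $H^2(X(\C),\Z/n)$; the $\Gal(\bar k/k)$-action on it is unramified outside an explicit finite set of places, so one locates a finite extension $L\supseteq K$ trivialising it --- there being only finitely many candidates of bounded degree --- by computing Frobenius at auxiliary places, and reads off the invariants. The remaining and most delicate step is to lift each resulting class back from $H^2(X_{\bar k},\mu_n)$ to a class on $X$ and present it concretely --- as an Azumaya algebra, or as an element of $H^2(X,\mu_n)$ that can be evaluated at local points --- rather than merely abstractly: existence of the lift is guaranteed by the two vanishings already used ($H^1(X_{\bar k},\mu_n)=0$ and $H^3(k,\G_m)=0$), but making the construction effective, through descent along $L/k$ and the explicit geometry of $X$ and of the divisors generating $\Pic(X_{\bar k})$, is the crux of the proof.
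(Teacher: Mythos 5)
There is a genuine gap. Your outer framing agrees with the paper's (produce an explicit finite set $B$ of representatives whose span together with $\Br(k)$ and the algebraic part $\Br_1(X)$ covers $\Br(X)[n]$, handle $\Br_1(X)$ and the final local-invariant computation by the methods of \cite{KTeff}), but the entire technical content of the theorem is concentrated in the step you explicitly defer as ``the crux'': producing, effectively, cocycle or Azumaya representatives on $X$ itself for a generating set of the image of $\Br(X)[n]$ in $\Br(X_{\bar k})$. Knowing $H^2(X_{\bar k},\mu_n)$ as an abstract group via Artin comparison, and knowing the Galois action on it, gives you nothing that can be evaluated at an adelic point. The paper's proof is essentially a six-step effective construction that fills exactly this hole: (1) a fibration $X\to\PP^1$ and the Leray isomorphism $\Br(X^\circ_{\bar k})\cong H^1(S_{\bar k},\Pic_{X^\circ_{\bar k}/S_{\bar k}})$ yield explicit $2$-cocycles of rational functions generating the $n$-torsion over an open set; (2) an effective test for relations among these generators (Brauer's method: reduce to group cohomology of a finite covering group, then search for cyclic degree-$n$ covers using effective Jacobian arithmetic and an effective Tsen theorem); (3) gluing representations over two fibrations to get cocycles on a codimension-$2$ complement; (4) Galois descent from $K$ to $k$; (5) conversion to global Azumaya algebras via purity. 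None of these appears, even in outline, in your proposal, and without them ``present it concretely'' is an assertion rather than an argument.

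A second, more pointed error: your claimed exact sequence
$0\to\Br(k)[n]\to H^2(X,\mu_n)\to H^2(X_{\bar k},\mu_n)^{\Gal(\bar k/k)}\to 0$
is not correct in general, and the two vanishings you invoke do not suffice to lift Galois-invariant classes. With $H^1(X_{\bar k},\mu_n)=0$ the $d_2$-obstruction vanishes, but the $d_3$-obstruction lands in (a subquotient of) $H^3(k,\mu_n)\cong\Br(k)[n]$, which is nonzero for a number field; and $H^3(k,\G_m)=0$ controls the $\G_m$-spectral sequence, not the $\mu_n$ one. Descent is genuinely obstructed: the paper computes an effective obstruction class in $H^2(\Gal(K/k),\Pic(X_K))$ (and a secondary one in $H^3(\Gal(K/k),K^*)$, which can be killed by a further field extension) and only produces a class in $\Br(X)$ when the first obstruction vanishes. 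Your argument would need to incorporate this obstruction both to decide which invariant classes contribute to $B$ and to construct their representatives. The surrounding reductions in your write-up (why $\Br(k)$-classes impose no condition, the deduction of the last sentence from a bound on $|\Br(X)/\Br(k)|$, the role of torsion-freeness of $\Pic(X_{\bar k})$) are fine.
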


For instance, in the case of a diagonal quartic surface over $\Q$ there
is an effective bound on $|\Br(X)/\Br(\Q)|$ due to
Ieronymou, Skorobogatov, and Zarhin \cite{ISZ}.

While it is not known how to compute $\Pic(X_{\bar k})$ effectively,
in general, there is a method of computation involving reduction
modulo primes used by van Luijk \cite{vL}; further examples can be found
in \cite{EJ} and \cite{HVV}.

\medskip
\noindent\textbf{Acknowledgements.}
The first author was supported by the SNF.
The second author was supported by NSF grants 0739380 and 0901777.
Both authors acknowledge the gracious support of the
Forschungsinstitut f\"ur Mathematik of the ETH, which hosted the second
author for a research visit in Zurich.

\section{Picard schemes}
\label{sec.picardschemes}
Let $X\to S$ be a finite-type morphism of locally Noetherian schemes.
We recall that the functor associating to an $S$-scheme $T$ the group
$$\Pic_{X/S}(T):=\Pic(X\times_ST)/\Pic(T)$$
is known as the \emph{relative Picard functor}.
It restricts to a sheaf on the \'etale site $S_{\mathrm{et}}$ when $S$ is a nonsingular curve over
an algebraically closed field, by Tsen's theorem.
See \cite{kleiman}.

We use $\Br(X)$ to denote the cohomological Brauer group of a Noetherian scheme $X$,
i.e., the torsion subgroup of the \'etale cohomology group $H^2(X,\G_m)$.
When $X$ is regular, $H^2(X,\G_m)$ is itself a torsion group.
By Gabber's theorem, if $X$ admits an ample invertible sheaf then
$\Br(X)$ is also equal to the Azumaya Brauer group, i.e., the equivalence
classes of sheaves of Azumaya algebras on $X$.
For background on the Brauer group, the reader is referred to
\cite{gb}, and for a proof of Gabber's theorem, see \cite{dejong}.

Let $S$ be a nonsingular irreducible curve over an algebraically closed field,
and let $f:X\to S$ be a smooth projective morphism of relative dimension
1 with connected fibers.
Then the Leray spectral sequence
$$E_2^{p,q}=H^p(S,R^qf_*\G_m) \Longrightarrow H^{p+q}(X,\G_m)$$
gives, by \cite[Cor.\ III.3.2]{gb}, an isomorphism
\begin{equation}
\label{eqn.lerayexaseq}
\Br(X)\eqto H^1(S,\Pic_{X/S}).
\end{equation}
Furthermore, we have an exact sequence
$$0\to \Pic^0_{X/S}\to \Pic_{X/S}\to \Z\to 0$$
of sheaves (on $S_{\mathrm{et}}$) hence an exact sequence
\begin{equation}
\label{eqn.withd}
0\to \Z/d\Z\to H^1(S,\Pic^0_{X/S})\to H^1(S,\Pic_{X/S})\to 0,
\end{equation}
where $d$ is the gcd of the relative degrees of all multisections of $f$.
Now assume that the algebraically closed base field has characteristic
not dividing $n$.
Then we have the exact sequence of sheaves
$$0\to \Pic_{X/S}[n]\to \Pic^0_{X/S}\stackrel{n\cdot}\to \Pic^0_{X/S}\to 0$$
(exactness on the right follows by
\cite[21.9.12]{EGAIV} and \cite[Prop.\ 9.5.19]{kleiman})
from which the long exact sequence in cohomology gives a surjective homomorphism
\begin{equation}
\label{eqn.ntorsion}
H^1(S,\Pic_{X/S}[n])\to H^1(S,\Pic^0_{X/S})[n].
\end{equation}

\begin{lemm}
\label{lem.H1Cmun}
Let $K$ be a field, and let $D$ be a geometrically
irreducible smooth projective curve over $K$.
Let $n$ be a positive integer, not divisible by $\chara(K)$.
Let $C$ be a nonempty open subset of $D$, with $Y:=D\smallsetminus C$
nonempty.
The inclusions will be denoted $i\colon Y\to D$ and $j\colon C\to D$.
\begin{itemize}
\item[(i)] We have $R^1j_*\mu_n=i_*(\Z/n\Z)$.
\item[(ii)] For a tuple of integers $(a_y)_{y\in Y}$ with
reductions $(\bar a_y)$ modulo $n$, we have $(\bar a_y)$ in the image of
the map
$$H^1(C,\mu_n)\to H^0(D,R^1j_*\mu_n) = \bigoplus_{y\in Y}\Z/n\Z$$
coming from the Leray spectral sequence if and only if
there exists a divisor $\delta$ on $D$ with $n\delta\sim \sum a_y[y]$,
where $\sim$ denotes linear equivalence of divisors.
\end{itemize}
\end{lemm}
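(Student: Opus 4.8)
The plan is to prove (i) by a stalkwise computation and to derive (ii) from the five-term exact sequence of the Leray spectral sequence for $j$, the crux being to identify the transgression map with a divisor class map. For (i): since $j$ is an open immersion, $R^qj_*\mu_n$ is supported on $Y$ for $q\ge 1$, so it suffices to compute the stalk of $R^1j_*\mu_n$ at a geometric point $\bar y$ over a closed point $y\in Y$. That stalk is $H^1(\Spec F_y,\mu_n)$, where $F_y$ is the fraction field of the strictly henselian local ring of $D$ at $\bar y$ — a strictly henselian discrete valuation ring with algebraically closed residue field of characteristic prime to $n$. By Kummer theory this group is $F_y^\times/(F_y^\times)^n$, and since the units of such a ring are $n$-divisible (Hensel's lemma), the valuation induces an isomorphism $F_y^\times/(F_y^\times)^n\cong\Z/n\Z$, equivariant for the trivial Galois action. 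The same local computation gives $R^qj_*\mu_n=0$ for $q\ge2$ as well as $R^1j_*\G_{m,C}=0$, and shows that the divisor sequence $0\to\G_{m,D}\to j_*\G_{m,C}\xrightarrow{v}i_*\Z\to 0$ on $D$ is exact. Applying the snake lemma to multiplication by $n$ on this sequence — using that the Kummer sequence of $\G_{m,D}$ is exact and that the cokernel of multiplication by $n$ on $j_*\G_{m,C}$ is $R^1j_*\mu_n$ — yields $R^1j_*\mu_n\cong i_*(\Z/n\Z)$, with the isomorphism realized as ``$v$ modulo $n$''. This explicit description is what part (ii) needs.

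For (ii): $\mu_n$ is lisse, so $j_*\mu_n=\mu_n$, and $R^{\ge 2}j_*\mu_n=0$ by the above; hence the Leray five-term exact sequence reads
\[
0\to H^1(D,\mu_n)\to H^1(C,\mu_n)\xrightarrow{\rho} H^0(D,R^1j_*\mu_n)\xrightarrow{d_2} H^2(D,\mu_n),
\]
and by (i) its third term is $\bigoplus_{y\in Y}\Z/n\Z$. Thus $(\bar a_y)\in\im(\rho)$ if and only if $d_2((\bar a_y))=0$. The claim is that $d_2$ equals the composite $\bigoplus_{y}\Z/n\Z\to\Pic(D)/n\Pic(D)\hookrightarrow H^2(D,\mu_n)$, where the first map sends $(\bar a_y)$ to the class of $\cO_D(\sum a_y[y])$ and the second is the injection coming from the Kummer sequence on $D$. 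Granting this, $d_2((\bar a_y))=0$ is equivalent to $\sum a_y[y]$ lying in $n\Pic(D)+\{\text{principal divisors}\}$, i.e.\ to the existence of a divisor $\delta$ on $D$ with $n\delta\sim\sum a_y[y]$ — which is exactly (ii).

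To prove the claim I would compare the Kummer sequences on $C$ and $D$ through the divisor sequence. Writing $\beta\colon\G_{m,D}\to\mu_n[1]$ and $\beta_C\colon\G_{m,C}\to\mu_n[1]$ for the connecting morphisms of the Kummer triangles on $D$ and on $C$, the maps $\beta$, $Rj_*\beta_C$ and reduction modulo $n$ assemble into a morphism of distinguished triangles
\[
\bigl(\G_{m,D}\to j_*\G_{m,C}\xrightarrow{v}i_*\Z\xrightarrow{+1}\bigr)\ \longrightarrow\ \bigl(\mu_n[1]\to Rj_*\mu_n[1]\to i_*(\Z/n\Z)\xrightarrow{+1}\bigr),
\]
the third vertical arrow being reduction modulo $n$ precisely because of the ``$v$ modulo $n$'' description from (i). Applying $R\Gamma(D,-)$ and comparing connecting maps produces a commutative square relating the boundary map $\bigoplus_y\Z\to\Pic(D)$ of the divisor sequence — which classically is the divisor class map — to $d_2$, via reduction modulo $n$ on the source and the Kummer boundary $\Pic(D)\to H^2(D,\mu_n)$ on the target; this is the claim. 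I expect the main obstacle to be exactly this bookkeeping: ensuring that the isomorphism of (i), the comparison of the two long exact sequences, and the classical description of the divisor-sequence boundary all match up on the nose. (Alternatively one can argue by hand: a class in $H^1(C,\mu_n)$ is represented by a line bundle $\cO_C(-E)$ with a trivialization of its $n$th power; writing $nE=\dv_C(h)$ with $h\in K(D)^\times$, the $y$-component of $\rho$ is $\operatorname{ord}_y(h)\bmod n$, while $\dv_D(h)=nE'+\sum_y\operatorname{ord}_y(h)[y]$ with $E'$ the divisor $E$ on $D$ gives $\sum_y\operatorname{ord}_y(h)[y]\sim -nE'$; running this in both directions yields (ii).) Everything else is routine.
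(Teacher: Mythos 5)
Your proof is correct, and it rests on the same two pillars as the paper's: the five-term exact sequence of the Leray spectral sequence for $j$, and the Kummer sequence. Your treatment of (i) by a stalkwise computation over the strictly henselian local ring (units are $n$-divisible, so the valuation induces $F_y^\times/(F_y^\times)^n\cong\Z/n\Z$ with trivial Galois action) is a hands-on version of the paper's argument, which instead quotes purity to reduce to a local statement and reads off the cyclic cokernel from the local five-term sequence. Where you genuinely diverge is in (ii): you identify the transgression $d_2^{0,1}$ once and for all with the composite $\bigoplus_y\Z/n\Z\to\Pic(D)/n\Pic(D)\hookrightarrow H^2(D,\mu_n)$ (via a morphism from the divisor sequence $0\to\G_{m,D}\to j_*\G_{m,C}\to i_*\Z\to 0$ to the shifted Kummer triangle), after which both directions of the equivalence drop out formally from injectivity of $\Pic(D)/n\to H^2(D,\mu_n)$. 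The paper never computes $d_2$ abstractly; it argues each direction concretely with rational functions --- adjoining $r^{1/n}$ for ``if'', and extracting a divisor $\delta$ with $n\delta=\dv(r)$ on $C$ for ``only if'' --- which is precisely your parenthetical ``by hand'' alternative. Your structural route buys a cleaner statement (the obstruction is literally the class of $\sum a_y[y]$ in $\Pic(D)/n$) at the cost of the triangle bookkeeping you flag, including checking that $R^1j_*\G_{m,C}=0$ so that the divisor sequence computes $Rj_*$ in the relevant degrees; the paper's route avoids that bookkeeping entirely and, being explicit in terms of functions and divisors, is better adapted to the effectivity claims made later in the text.
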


\begin{proof}
The Leray spectral sequence gives
\begin{equation}
\label{eqn.lerayCD}
0\to H^1(D,\mu_n)\to H^1(C,\mu_n)\to H^0(D,R^1j_*\mu_n)
\stackrel{d_2^{0,1}}\to H^2(D,\mu_n)\to H^2(C,\mu_n).
\end{equation}
For (i), by standard spectral sequences we have
$R^1j_*\mu_n=i_*\underline{H}^2_Y(\mu_n)$
(cf.\ \cite[proof of Thm.\ VI.5.1]{milne}).
So we are reduced to a local computation, and we may therefore assume that
$D$ is affine and $Y$ consists of a single point which is a principal
Cartier divisor on $D$.
By the Kummer sequence and injectivity of $\Br(C)\to \Br(D)$
the right-hand map in \eqref{eqn.lerayCD} is injective, while
the left-hand map has cokernel cyclic of order $n$.
(Such an isomorphism exists generally for regular codimension $1$
complements, see \cite[(XIX.3.3)]{sga4}.)

For the ``if'' direction of (ii), we take $r\in K(D)^*$ to be
a rational function whose divisor is $-n\delta + \sum a_y[y]$.
Then adjoining $r^{1/n}$ to the function field of $D$ yields an
element of $H^1(C,\mu_n)$ whose image in $H^0(D,R^1j_*\mu_n)$ is
$(\bar a_y)$ by the isomorphism in (i).
For the ``only if'' direction, an element of $H^1(C,\mu_n)$ gives rise
by the Kummer exact sequence to a divisor $\delta$ on $C$ and
$r\in K(C)^*$ by which $n\delta\sim 0$ on $C$.
Then $n\delta\sim \sum b_y[y]$ on $D$, for some integers $b_y$,
and the given element of $H^1(C,\mu_n)$ maps by $d^{0,1}_2$ to
$(\bar b_y)$.
This means that $a_y\equiv b_y$ mod $n$ for all $y\in Y$, and we
easily obtain $\delta'$ on $D$ with $n\delta'\sim \sum a_y[y]$.
\end{proof}

\section{Brauer groups}
\label{sec.brauer}

We start with some general results about cocycles in \'etale cohomology.

\begin{lemm}
\label{lem.effmv}
Let $X$ be a Noetherian scheme,
union of open subschemes $X_1$ and $X_2$,
and let $G$ be an abelian \'etale sheaf.
Suppose given \'etale covers $Y_i\to X_i$ and \v{C}ech cocycles
$\beta_i\in Z^2(Y_i\to X_i,G)$ for $i=1$, $2$.
With $X_{12}=X_1\cap X_2$ and $Y_{12}=Y_1\times_X Y_2$, we suppose
further that a cochain $\delta\in C^1(Y_{12}\to X_{12},G)$ is given, satisfying
\[
\frac{\delta(y_1,y'_1,y_2,y'_2)\delta(y'_1,y''_1,y'_2,y''_2)}{\delta(y_1,y''_1,y_2,y''_2)}
=\frac{\beta_1(y_1,y'_1,y''_1)}{\beta_2(y_2,y'_2,y''_2)}
\]
for $(y_1,y_2,y'_1,y'_2,y''_1,y''_2)\in Y_{12}\times_XY_{12}\times_XY_{12}$.
Then we have $\beta\in Z^2(Y_1 \amalg Y_2\to X,G)$, given by
\begin{align*}
(y_1,y'_1,y''_1) &\mapsto \beta_1(y_1,y'_1,y''_1), \\
(y_1,y'_1,y''_2) &\mapsto \delta(y_1,y'_1,y''_2,y''_2)\beta_2(y''_2,y''_2,y''_2), \\
(y_1,y'_2,y''_1) &\mapsto \delta(y_1,y''_1,y'_2,y'_2)^{-1}\beta_2(y'_2,y'_2,y'_2)^{-1}, \\
(y_1,y'_2,y''_2) &\mapsto \delta(y_1,y_1,y'_2,y''_2)^{-1}\beta_1(y_1,y_1,y_1), \\
(y_2,y'_1,y''_1) &\mapsto \delta(y'_1,y''_1,y_2,y_2)\beta_2(y_2,y_2,y_2), \\
(y_2,y'_1,y''_2) &\mapsto \delta(y'_1,y'_1,y_2,y''_2)\beta_1(y'_1,y'_1,y'_1)^{-1}, \\
(y_2,y'_2,y''_1) &\mapsto \delta(y''_1,y''_1,y_2,y'_2)^{-1}\beta_1(y''_1,y''_1,y''_1), \\
(y_2,y'_2,y''_2) &\mapsto \beta_2(y_2,y'_2,y''_2),
\end{align*}
whose class restricts to the class of $\beta_i$ in $H^2(X_i,G)$ for
$i=1$, $2$.
\end{lemm}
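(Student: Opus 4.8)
The plan is to read this as the cocycle-level, \emph{effective} incarnation of the Mayer--Vietoris principle: the stated hypothesis on $\delta$ is exactly the statement that the restrictions of $\beta_1$ and $\beta_2$ to $X_{12}$ become cohomologous there, with $\delta$ an explicit trivializing $1$-cochain, and the conclusion is that the two classes can then be glued to a class on $X$. To make this precise, put $Y=Y_1\amalg Y_2$. Since each $Y_i\to X_i$ is \'etale surjective and $X=X_1\cup X_2$, the map $Y\to X$ is \'etale surjective, hence a cover in the sense used for \v{C}ech cohomology. For each $q$ the iterated fiber product over $X$ of $q+1$ copies of $Y$ decomposes as the disjoint union, over $\epsilon\in\{1,2\}^{q+1}$, of $Y_{\epsilon_0}\times_X\cdots\times_X Y_{\epsilon_q}$; when the $\epsilon_i$ are all equal this is a fiber power of $Y_{\epsilon_0}$ over $X_{\epsilon_0}$, and when both values $1$ and $2$ occur it maps into $X_{12}$ (the common image factors through $X_1$ and through $X_2$). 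Hence $C^q(Y\to X,G)$ is the product of the groups of sections of $G$ over these components, and giving the eight components of an element of $C^2(Y\to X,G)$ is precisely what the displayed formulas do; one only checks that the partially diagonal evaluations occurring there, such as $\beta_i(y_i,y_i,y_i)$ or $\delta$ on a tuple with repeated entries, are legitimate, which holds because on each mixed component all source points lie over a single point of $X_{12}$. Finally, identifying $Y_{12}\times_X Y_{12}=Y_{12}\times_{X_{12}}Y_{12}$ (as $X_{12}\to X$ is a monomorphism) and using the projections $Y_{12}\to Y_i$, the hypothesis on $\delta$ is literally the identity $d\delta=\beta_1\beta_2^{-1}$ in $C^2(Y_{12}\to X_{12},G)$.

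Next I would verify that the displayed $\beta$ lies in $Z^2(Y\to X,G)$, i.e.\ that $d\beta=1$ in $C^3(Y\to X,G)$. By the decomposition of the preceding paragraph this amounts to sixteen identities, one for each $\epsilon\in\{1,2\}^4$. For $\epsilon=(1,1,1,1)$ and $\epsilon=(2,2,2,2)$ the identity is just $d\beta_1=1$, respectively $d\beta_2=1$, which holds by hypothesis. For each of the fourteen mixed $\epsilon$ the computation takes place entirely over $X_{12}$: one substitutes the relevant formulas from the list, and after cancellation the required identity reduces to a combination of the cocycle relations for $\beta_1$ and for $\beta_2$ (evaluated on arguments several of which coincide) together with the compatibility relation $d\delta=\beta_1\beta_2^{-1}$. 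The number of mixed cases to be treated by hand is roughly halved by the involution that interchanges the indices $1$ and $2$ and replaces $\delta$ by $\delta^{-1}$, under which the list of formulas is stable up to inversion. I expect this step---the bookkeeping over the fourteen mixed components---to be the main obstacle; it is entirely mechanical but needs care with the ordering of the arguments and with which entries are forced to agree on each component.

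Finally I would check that the class of $\beta$ in $H^2(X,G)$ restricts to the class of $\beta_i$ in $H^2(X_i,G)$. Restricting the cover $Y\to X$ to $X_1$ gives $Y|_{X_1}=Y_1\amalg(Y_2|_{X_{12}})\to X_1$, which is refined by $Y_1\to X_1$ via the inclusion of the first component. Pulling the cocycle $\beta|_{X_1}$ back along this refinement simply extracts its component indexed by $(1,1,1)$, which by construction equals $\beta_1$; since a refinement of covers induces a map on \v{C}ech cohomology compatible with the canonical maps to $H^2(X_1,G)$, it follows that the class of $\beta|_{X_1}$---that is, the restriction to $X_1$ of the class of $\beta$---agrees with the class of $\beta_1$ in $H^2(X_1,G)$. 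The same argument with the roles of $1$ and $2$ exchanged handles $X_2$, and this completes the proof modulo the computation of the second paragraph.
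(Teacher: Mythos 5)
Your proposal is correct and takes essentially the same route as the paper, whose entire proof reads: ``This is just a portion of the Mayer--Vietoris sequence, written out explicitly in terms of cocycles.'' You in fact supply more scaffolding than the authors do (the component decomposition of the fiber powers of $Y_1\amalg Y_2$ over $X$, the identification of the hypothesis with $d\delta=\beta_1\beta_2^{-1}$ on $Y_{12}\to X_{12}$, and the reduction of $d\beta=1$ to sixteen componentwise identities), while deferring the same mechanical verification of the mixed components that the paper also leaves to the reader.
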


\begin{proof}
This is just a portion of the Mayer-Vietoris sequence, written out explicitly
in terms of cocycles.
\end{proof}

The following two results are based on the existence of Zariski local trivializations
of $1$-cocycle with values in $\G_m$.
Such trivializations exist effectively when
the $1$-cocycle is effectively presented,
say on a scheme of finite type over a number field.

\begin{lemm}
\label{lem.cobounda}
Let $k$ be a number field, $X$ a finite-type scheme over $k$,
$Y\to X$ and $Z\to X$ finite-type \'etale covers,
and $Y\to Z$ a morphism over $X$.
Suppose that
$\beta\in Z^2(Z\to X,\G_m)$ and $\delta\in C^1(Y\to X,\G_m)$ are given, so
that the restriction of $\beta$ by $Y\to Z$ is equal to the
coboundary of $\delta$.
Then we may effectively produce a Zariski open covering
$Z=\bigcup_{i=1}^N Z_i$ for some $N$ and a
$1$-cochain for $\coprod_{i=1}^N Z_i\to X$ whose coboundary is
equal to the restriction of $\beta$ by $\coprod_{i=1}^N Z_i\to Z$.
\end{lemm}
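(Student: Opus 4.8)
The plan is to exploit the hypothesis that the restriction $\beta|_Y$ is a coboundary in order to reduce the problem, Zariski-locally on $Z$, to trivializing a $1$-cocycle with values in $\G_m$. First I would observe that the data $(\beta,\delta)$ together determine, for the pair of covers $Y\to Z\to X$, a $1$-cochain $\gamma$ on $Y\times_X Y$ measuring the failure of $\delta$ to descend: since $\partial\delta = \beta|_Y$ and $\beta$ is pulled back from $Z$, the element $\gamma := (\mathrm{pr}_1^*\delta)(\mathrm{pr}_2^*\delta)^{-1}$, viewed on $Y\times_X Y$, is a $1$-cocycle (its coboundary is the ratio of the two pullbacks of $\beta|_Y$, which is trivial because $\beta$ itself comes from $Z$). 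Moreover $\gamma$ descends to a $1$-cocycle on $Y\times_Z Y$, and since $Y\to Z$ is a finite-type \'etale cover, this class lives in $H^1$ of an \'etale cover of $Z$ with coefficients in $\G_m$.

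The key point is then that a $1$-cocycle with values in $\G_m$ on a finite-type $k$-scheme is Zariski-locally trivial, and this trivialization can be produced effectively: by Hilbert 90 (in the form of effective solvability of norm equations, or directly by the effective splitting of $\G_m$-torsors over affine schemes of finite type over a number field) one obtains, after passing to a finite Zariski open cover $Z=\bigcup_{i=1}^N Z_i$, a $0$-cochain $\epsilon_i$ on $Y\times_Z Y$ restricted over each $Z_i$ that cobounds $\gamma$. Concretely, over each $Z_i$ the cover $Y\times_X Z_i\to Z_i$ acquires a section, or at least the class $\gamma|_{Z_i}$ becomes a coboundary; the effectivity here is exactly the content of the remark preceding the lemma about effective Zariski-local trivializations of $\G_m$-valued $1$-cocycles.

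Given such $\epsilon_i$, I would modify $\delta$ over each $Z_i$ to obtain a $1$-cochain $\delta_i$ for $Y\times_X Z_i \to Z_i$ which now \emph{does} descend to a $1$-cochain $\eta_i$ for $Z_i\to X$ (restricted appropriately); the descended cochain $\eta_i$ has coboundary equal to $\beta|_{Z_i}$ by construction, since descent of $\delta_i$ is compatible with the fact that $\partial\delta_i = \beta|_{Y\times_X Z_i}$ is already pulled back from $Z_i$. Assembling the $\eta_i$ over $i=1,\dots,N$ gives the desired $1$-cochain for $\coprod_{i=1}^N Z_i\to X$ whose coboundary is the restriction of $\beta$. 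The main obstacle is the effectivity bookkeeping: one must check that each step — forming $\gamma$, finding the effective Zariski cover and the trivializing $0$-cochains $\epsilon_i$, and performing the descent — stays within the class of operations admitting an a priori bound in terms of the input, and in particular that the number $N$ of open sets and the degrees of the equations defining the $Z_i$ are controlled. Ordinary \v{C}ech-cohomological descent is routine; it is only the insistence on effectivity that requires care, and that reduces cleanly to the cited effective form of Hilbert 90 over number fields.
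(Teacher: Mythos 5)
Your overall strategy is the same as the paper's: package the obstruction to descending $\delta$ from $Y$ to $Z$ as a $\G_m$-valued \v{C}ech $1$-cocycle over $Z$, trivialize it effectively on a Zariski open cover of $Z$ (this is the line-bundle form of Hilbert~90 invoked in the remark preceding the lemma; the norm-equation form is not what is needed here), and use the local trivializations to twist $\delta$ into a cochain that descends. However, the cocycle you write down is not the right object, and the construction as stated would fail. Since $\delta$ is a function on $Y\times_XY$, the only meaningful reading of $\gamma=(\mathrm{pr}_1^*\delta)(\mathrm{pr}_2^*\delta)^{-1}$ places it on $(Y\times_ZY)\times_X(Y\times_ZY)$, not on $Y\times_XY$; and its restriction to a $1$-cochain for $Y\times_ZY\to Z$ is, by the hypothesis on $\partial\delta$, equal to $\delta(y,\tilde y)/\delta(y',\tilde y')$, i.e.\ it is the coboundary of the $0$-cochain $\delta^{-1}|_{Y\times_ZY}$ and hence trivial for free --- trivializing it yields no information. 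More structurally, a trivializing $0$-cochain for a cocycle on the cover $Y\times_ZY\to Z$ is a function on $Y\times_ZY$, and such a function cannot be used to make $\delta$ constant along the fibres of $Y\times_XY\to Z\times_XZ$; for that you need invertible functions on $Y$ itself, locally over $Z$. The correct cocycle is $(y,y')\mapsto\beta(z,z,z)/\delta(y,y')$ on $Y\times_ZY$, regarded as a $1$-cocycle for the cover $Y\to Z$ (the cocycle identity is exactly the hypothesis $\partial\delta=\beta|_Y$ restricted over the diagonal of $Z$). Its local trivializations $\varepsilon_i$, which are invertible functions on the preimage of $Z_i$ in $Y$, are what you twist by: the quantity $\varepsilon_j(y')\delta(y,y')/\varepsilon_i(y)$ then depends only on the image $(z,z')\in Z_i\times_XZ_j$. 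Note also that the descended cochain must be defined on all products $Z_i\times_XZ_j$, not only for $i=j$ as your ``modify $\delta$ over each $Z_i$'' phrasing suggests.

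A second, smaller gap: you assume $Y\to Z$ is surjective (``since $Y\to Z$ is a finite-type \'etale cover''), whereas the statement only provides a morphism over $X$. If $Y\to Z$ is not surjective, $Y\times_ZY$ sees only part of $Z$, and the argument produces the $\varepsilon_i$ only over the image of $Y$. One must first replace $Y$ by $Y\times_XZ$, which does surject onto $Z$ because $Y\to X$ is a cover, transporting $\delta$ along the way by means of the fact that the two refinement maps $Y\times_XZ\to Z$ (via $Y$ and via the projection) change a cocycle by an explicit coboundary. With these two repairs your argument becomes the paper's proof.
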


\begin{proof}
Replacing $Y$ by $Y\times_XZ$ and using that the restriction maps on the
level of \v{C}ech cocycles corresponding to
$Y\times_XZ\to Y\to Z$ and $Y\times_XZ\to Z$ differ by an explicit
coboundary (cf. \cite[Lem.\ III.2.1]{milne}), we are reduced to the
case that $Y\to Z$ is also a covering.

Then we have the $1$-cocycle for $Y\to Z$
$$(y,y')\mapsto \frac{\beta(z,z,z)}{\delta(y,y')},$$
for $(y,y')\in Y\times_ZY$ over $z$.
This may be trivialized effectively on a Zariski open neighborhood of
any point of $Z$, so we may effectively obtain a refinement of $Z$ to a
Zariski open covering and functions $\varepsilon_i$ satisfying
$$\frac{\varepsilon_i(y')}{\varepsilon_i(y)}=
\frac{\beta(z,z,z)}{\delta(y,y')}$$
for all $i$ and $(y,y')\in Y\times_ZY$ over $z\in Z_i$.
It follows that for all $i$ and $j$, and
$(y,y')\in Y\times_XY$ over $(z,z')\in Z\times_XZ$ with
$z\in Z_i$, $z'\in Z_j$, the function
$$\frac{\varepsilon_j(y')}{\varepsilon_i(y)}\delta(y,y')$$
depends only on $(z,z')$, hence we obtain
$\delta_0\in C^1(\coprod_{i=1}^N Z_i\to X)$
satisfying
$$\frac{\varepsilon_j(y')}{\varepsilon_i(y)}\delta(y,y')=\delta_0(z,z').$$
The conclusion follows immediately from this formula.
\end{proof}

\begin{lemm}
\label{lem.effBring}
Let $X$ be a smooth finite-type scheme over a number field $k$,
let $Z\to X$ be a finite-type \'etale covering, and let $Y\to Z$ be
a finite-type \'etale morphism with dense image.
Let $\beta\in Z^2(Z\to X,\G_m)$ be given, along with
$\delta\in\mathcal{O}_{Y\times_XY}^*$ satisfying
$$
\delta(y,y')\delta(y',y'')/\delta(y,y'')=\beta(z,z',z''),
$$
for all $(y,y',y'')\in Y\times_XY\times_XY$ over
$(z,z',z'')\in Z\times_XZ\times_XZ$.
Then there exists, effectively, a Zariski open covering
$(Z_i)_{1\le i\le N}$ of $Z$ (for some $N$) and a $1$-cocycle for
$\coprod Z_i\to X$ whose coboundary is the restriction of $\beta$
by $\coprod Z_i\to Z$.
\end{lemm}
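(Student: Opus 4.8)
The plan is to reduce to Lemma~\ref{lem.cobounda} over the dense open image of $Y\to Z$, and then to propagate the resulting trivialization across the rest of $Z$ using the regularity of $X$.

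First I would let $U\subseteq Z$ be the image of $Y\to Z$. As $Y\to Z$ is \'etale, hence open, and has dense image, $U$ is a dense open subscheme of $Z$ and $Y\to U$ is a covering; replacing $X$ by its connected components, I may assume $X$, hence $Z$, irreducible, so that $X':=\mathrm{im}(Y\to X)$ is dense open in $X$. Since $U\times_{X'}U=U\times_XU$ and $Y\times_{X'}Y=Y\times_XY$, the data $\beta|_U\in Z^2(U\to X',\G_m)$ and $\delta\in C^1(Y\to X',\G_m)$ satisfy the hypotheses of Lemma~\ref{lem.cobounda} with $X',Y,U$ in place of $X,Y,Z$. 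Applying it yields effectively a Zariski open covering $(U_i)_{1\le i\le N'}$ of $U$ --- whose members are also Zariski open in $Z$ --- and a $1$-cochain $\delta_1\in C^1(\coprod U_i\to X',\G_m)$ with coboundary the restriction of $\beta$. Thus the restriction of $\beta$ along $\coprod U_i\to X'$ is a coboundary, so the class of $\beta$ in $H^2(X,\G_m)=\Br(X)$ (the image of $\beta$ under the edge map for the cover $Z\to X$) restricts to $0$ on $X'$; as $X$ is regular and $X'$ dense, $\Br(X)\hookrightarrow\Br(X')$, whence this class vanishes. It is this vanishing that makes the extension below possible.

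The heart of the matter is to propagate the trivialization to all of $Z$. Following the proof of Lemma~\ref{lem.cobounda}, over $Y\times_UY$ one has the $1$-cocycle $c(y,y')=\beta(z,z,z)/\delta(y,y')$, where $\beta(z,z,z)$ denotes the restriction of $\beta$ to the diagonal $Z\hookrightarrow Z\times_XZ\times_XZ$; it represents an invertible sheaf $\cL_U$ on $U$. Because $X$ is smooth, $Z$ is regular, hence locally factorial, and $\Pic(Z)\to\Pic(U)$ is surjective --- effectively so: represent $\cL_U$ by the divisor of a rational section trivializing it over a dense open of $U$, and extend that divisor to $Z$ by Zariski closure. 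This produces an invertible sheaf $\cL$ on $Z$ restricting to $\cL_U$, together with an effective Zariski open covering $Z=\bigcup_{i=1}^NZ_i$ and nowhere-vanishing sections $s_i$ of $\cL|_{Z_i}$. Over $Z_i\cap U$, comparing $s_i$ with the canonical trivialization of the pullback of $\cL_U$ to $Y$ yields units $\varepsilon_i$ on $Y_{Z_i\cap U}$ with $\varepsilon_i(y')/\varepsilon_i(y)=c(y,y')$, and then, exactly as in the proof of Lemma~\ref{lem.cobounda}, $\delta_0(z,z'):=\bigl(\varepsilon_j(y')/\varepsilon_i(y)\bigr)\,\delta(y,y')$ depends only on $(z,z')$ and defines a unit on $(Z_i\cap U)\times_X(Z_j\cap U)$ whose coboundary is the restriction of $\beta$.

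The main obstacle is the final step: these $\delta_0(z,z')$ so far live only over $U$, and I must arrange, effectively, that they extend to units on all of $Z_i\times_XZ_j$ --- equivalently, that the invertible sheaf $p_2^*\cL\otimes(p_1^*\cL)^{-1}$ on $Z\times_XZ$, which is trivial over $U\times_XU$, is trivial, compatibly with $\delta$. I would do this by exploiting the freedom to replace $\cL$ by $\cL(E)$ for divisors $E$ supported on $Z\smallsetminus U$, to shrink the $Z_i$, and to multiply the $\delta_0(z,z')$ by rational functions pulled back from $U$, using the local factoriality of $Z$ to clear the divisors supported over $Z\smallsetminus U$; I expect this divisor bookkeeping to be the most delicate point. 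Once the $\delta_0(z,z')$ are everywhere units, the identity $d\delta_0=\beta$, valid on the dense locus over $X'$ and involving only the unit $\beta$, holds identically, so $(\delta_0(z,z'))$ is the required $1$-cochain for $\coprod Z_i\to X$. It is precisely here that smoothness of $X$, not assumed in Lemma~\ref{lem.cobounda}, is used.
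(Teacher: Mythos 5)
Your argument is correct and follows the paper's proof essentially step for step: reduce to Lemma~\ref{lem.cobounda} over the dense open image of $Y$, use smoothness of $Z$ to extend the line bundle determined by the resulting $1$-cocycle from that open set to all of $Z$ (so the Zariski covering can be taken of the form $(Z_i)$ with $Z=\bigcup Z_i$), and then extend the $1$-cochain across $Z_i\times_X Z_j$ by divisor manipulations. The final extension step you flag as the delicate point is exactly the one the paper dispatches in a single sentence via effective local principality of divisors on smooth schemes (citing \cite[\S 7]{KTeff} and \cite[Exa.\ III.2.22]{milne}), so your plan there --- twisting by divisors supported off the dense open, refining the $Z_i$, and correcting the $\delta_0$ by rational functions --- is the intended argument.
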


\begin{proof}
Let $X_0$ denote the image of the composite morphism $Y\to X$,
and $Z_0$ the pre-image of $X_0$ in $Z$.
By Lemma \ref{lem.cobounda} (or rather its proof) there exists
a Zariski open covering of $Z_0$ of the form
$(Z_0\cap Z_i)_{1\le i\le N}$ for some Zariski open covering
$(Z_i)$ of $Z$ (the $1$-cocycle mentioned in the proof determines
a line bundle on $Z_0$, which can be extended to a line bundle
on $Z$, since $Z$ is smooth) and a $1$-cochain for
$\coprod Z_i\cap Z_0\to X_0$ whose coboundary is the restriction
of $\beta$.
Using the fact that divisors on smooth schemes are locally
principal (and effectively so, e.g., see \cite[\S 7]{KTeff})
and \cite[Exa.\ III.2.22]{milne},
we see that after further refinement of $(Z_i)$ the
$1$-cochain extends to a $1$-cochain for
$\coprod Z_i\to X$.
\end{proof}

Let $X$ be a regular Noetherian scheme of dimension $2$.
It is known \cite[Cor.\ II.2.2]{gb}
that for any element
$\alpha\in \Br(X)$ of the (cohomological) Brauer group
there exists a sheaf of Azumaya algebras on $X$
having class equal to $\alpha$.

\begin{lemm}
\label{lem.effAz}
Let $X$ be a smooth projective surface over a number field $k$,
$\widehat{X}\subset X$ an open subscheme whose complement has codimension 2,
and $\alpha\in \Br(X)$ an element whose restriction over $\widehat{X}$
is represented by a
$2$-cocycle $\hat\beta$, relative to some finite-type \'etale cover
$\pi\colon \widehat{Y}\to \widehat{X}$.
We suppose that
$X$, $\widehat{X}$, $\widehat{Y}$, $\pi$, and $\hat\beta$ are given by explicit equations.
Then there is an effective procedure to produce a sheaf of Azumaya algebras
on $X$ representing the class $\alpha$.
\end{lemm}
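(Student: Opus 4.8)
The plan is in two steps: first realize the restriction $\alpha|_{\widehat X}$ by an \emph{explicit} sheaf of Azumaya algebras $\widehat{\cA}$ on $\widehat X$, constructed directly from the cocycle $\hat\beta$; then extend $\widehat{\cA}$ across the (finitely many, closed) points of $X\smallsetminus\widehat X$ to a sheaf of Azumaya algebras on all of $X$ whose class is $\alpha$.

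For the first step we may assume $\pi\colon\widehat Y\to\widehat X$ is a finite \'etale cover, of constant degree $d$ (as $X$, hence $\widehat X$, is irreducible; if $\pi$ is not already finite, replace it effectively by a finite \'etale refinement and pull $\hat\beta$ back). Let $\widehat{\cA}:=(\pi\times\pi)_*\cO_{\widehat Y\times_{\widehat X}\widehat Y}$, a locally free $\cO_{\widehat X}$-module of rank $d^2$, and equip it with the multiplication obtained from matrix multiplication — contraction along the middle factor of $\widehat Y\times_{\widehat X}\widehat Y\times_{\widehat X}\widehat Y$, using that the $(1,3)$-projection to $\widehat Y\times_{\widehat X}\widehat Y$ is finite \'etale — twisted by the $2$-cocycle $\hat\beta$. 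It is standard (see \cite{gb}) that $\widehat{\cA}$ is then a sheaf of Azumaya algebras whose class in $\Br(\widehat X)$ is the image of the \v{C}ech class of $\hat\beta$, hence equals $\alpha|_{\widehat X}$. Every item here is produced from explicit equations.

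For the second step let $j\colon\widehat X\to X$ be the inclusion and put $\cA:=j_*\widehat{\cA}$. Since $X\smallsetminus\widehat X$ has codimension $2$ and $X$ is a smooth surface, $\cA$ is a reflexive coherent $\cO_X$-module and therefore locally free. The algebra structure extends: applying $j_*$ to the multiplication of $\widehat{\cA}$ and precomposing with the natural map $\cA\otimes_{\cO_X}\cA\to j_*(\widehat{\cA}\otimes\widehat{\cA})$ gives a multiplication on $\cA$, and associativity and the unit axiom hold because $\cA$ is torsion free and they hold over the dense open $\widehat X$. To see that $\cA$ is Azumaya, consider the canonical $\cO_X$-algebra homomorphism $\cA\otimes_{\cO_X}\cA^{\mathrm{op}}\to\End_{\cO_X}(\cA)$ into the sheaf of $\cO_X$-linear endomorphisms: it restricts to an isomorphism over $\widehat X$, and source and target are locally free of the same rank, so its determinant is a section of a line bundle on $X$ that is nowhere zero on $\widehat X$ and whose vanishing locus is therefore an effective Cartier divisor contained in $X\smallsetminus\widehat X$; as the latter has codimension $2$, this divisor is empty, the homomorphism is an isomorphism, and $\cA$ is Azumaya of rank $d^2$. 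Finally $\Br(X)\to\Br(\widehat X)$ is injective (both inject into $\Br(k(X))$, as $X$ is regular), so the class of $\cA$ is $\alpha$. (Alternatively for this step: extend $\pi$ to a finite \'etale cover $Y\to X$ — it exists by purity of the branch locus, and is computed effectively as the normalization of $X$ in $\widehat Y$ followed by a check of \'etaleness — extend $\hat\beta$ across the codimension-$2$ locus of $Y\times_XY\times_XY$ to a $2$-cocycle using normality of $X$, and form the twisted matrix algebra directly over $X$.)

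It remains to make $\cA=j_*\widehat{\cA}$ effective. One first extends $\widehat{\cA}$ to some coherent sheaf $\cA_0$ on $X$ (effectively, e.g.\ by taking Zariski closures in a presentation), and then $\cA=\cA_0^{\vee\vee}$, since $\cA_0^{\vee\vee}$ is reflexive, restricts to $\widehat{\cA}$ over $\widehat X$, and a reflexive sheaf on a normal scheme agrees with its pushforward from the complement of a codimension-$2$ subset. Both operations — coherent extension and double dual — are effective, along the lines of the effective treatment of (locally principal) divisors used in the proof of Lemma~\ref{lem.effBring}; cf.\ \cite[\S7]{KTeff}. I expect this effectivity of extension across the codimension-$2$ locus — and, in the alternative route, the effective construction of $Y\to X$ together with the verification of \'etaleness — to be the only genuinely delicate point, since the abstract existence of the extension is classical while the theorem demands an algorithm with an \emph{a priori} running-time bound.
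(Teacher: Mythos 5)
Your extension step (reflexive hull across the codimension-$2$ locus, the determinant argument for the Azumaya condition, injectivity of $\Br(X)\to\Br(\widehat X)$) is correct and is essentially what the paper does, citing \cite[Thm.\ I.5.1(ii)]{gb}. The gap is the opening reduction: ``if $\pi$ is not already finite, replace it effectively by a finite \'etale refinement.'' No such refinement exists in general. A finite-type \'etale cover $\widehat Y\to\widehat X$ is only a surjective \'etale morphism, and there need be no finite \'etale $Y'\to\widehat X$ admitting a map to $\widehat Y$ over $\widehat X$. Indeed, in the cases this paper cares about most --- $X_{\bar k}$ simply connected (e.g.\ a $K3$ or diagonal quartic surface) and $\alpha$ with nonzero image in $\Br(X_{\bar k})$ --- every finite \'etale cover of $\widehat X_{\bar k}$ splits completely (removing a codimension-$2$ set does not change $\pi_1$ of a smooth variety), so the \v{C}ech $H^2$ of any such covering vanishes over $\bar k$ and could never represent $\alpha$. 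You also cannot first shrink $\widehat X$ until a finite \'etale refinement appears: that costs you the codimension-$2$ hypothesis, and then your determinant argument no longer forces the non-Azumaya locus of the reflexive hull to be empty --- it could be a divisor contained in $X\smallsetminus\widehat X$. The parenthetical alternative (normalize $X$ in $\widehat Y$ and ``check \'etaleness'') has the same problem: that normalization is in general ramified along divisors of $\widehat X$, purity of the branch locus does not apply, and you offer no fallback when the check fails.

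Getting around this is the real content of the lemma, and the paper's device is absent from your proposal. It shrinks $\widehat Y$ to a piece $V$ that is finite \'etale over a dense open of $\widehat X$ and replaces the covering by $\widehat W$, the normalization of $\widehat X$ in $V$: a finite \emph{flat} but in general \emph{ramified} cover, still defined over a codimension-$2$ complement. For a ramified cover your twisted matrix algebra fails --- already untwisted, $(\psi\times\psi)_*\cO_{\widehat W\times_{\widehat X}\widehat W}$ has nilpotents in its fibers over the branch locus and is not Azumaya there. The paper's fix is to produce, effectively via Lemma \ref{lem.effBring}, a $1$-cochain $\hat\gamma$ splitting the restriction of $\hat\beta$ over $\widehat W$, and to use $\hat\gamma$ as patching data for a twisted sheaf on $\widehat W$ whose endomorphism algebra over $\widehat X$ is the desired Azumaya algebra (\cite[Prop.\ IV.2.11]{milne}); it is this splitting cochain, not the bare cocycle, that survives ramification. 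Your direct construction of the twisted matrix algebra from $\hat\beta$ alone is a correct and pleasant shortcut \emph{when} $\pi$ happens to be finite \'etale, but the general case does not reduce to it.
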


Note, by purity for the Brauer group \cite[Thm.\ III.6.1]{gb}, we have
$\Br(\widehat{X})=\Br(X)$, so $\alpha$ is uniquely determined by
the cocycle $\hat\beta$.

\begin{proof}
Take $V\subset \widehat{Y}$ nonempty open such that
$\psi_0=\pi|_V$ is a finite \'etale covering of some open subscheme
of $\widehat{X}$.
Let $\psi\colon \widehat{W}\to \widehat{X}$ be the
normalization of $\widehat{X}$ in $(\psi_0)_*\mathcal{O}_V$.
Shrinking $\widehat{X}$ (and maintaining that its complement
in $X$ has codimension 2) we may suppose that $\widehat{W}$ is smooth.
By the universal property of the normalization (\cite[6.3.9]{EGAII}) there
is a (unique) lift $\widehat{Y}\to \widehat{W}$ of $\pi$.
Consider the element of $Z^2(\widehat{Y}\times_{\widehat{X}}\widehat{W}\to \widehat{W},\G_m)$
obtained by restricting $\hat\beta$.
The further restriction to
$Z^2(\widehat{Y}\times_{\widehat{X}}\widehat{Y}\to \widehat{Y},\G_m)$ is
(explicitly) a coboundary,
we apply Lemma \ref{lem.effBring} and observe that by the
proof, from the fact that $\widehat{W}\to \widehat{X}$ is finite and
hence universally closed, the Zariski refinement may be
taken to come from a Zariski refinement of $\widehat{Y}$, i.e., we
obtain
$\hat\gamma\in C^1(\coprod \widehat{Y}_i\times_{\widehat{X}}\widehat{W}\to \widehat{W},\G_m)$
whose coboundary is the restriction of $\hat\beta$.
Using the flatness of $\widehat{W}\to \widehat{X}$, we may regard
$\hat\gamma$ as patching data for a
sheaf of Azumaya algebras over $\widehat{X}$
as in \cite[Prop.\ IV.2.11]{milne},
whose class in the Brauer group is that of $\hat\beta$.
Pushforward via $\widehat{X}\to X$ may be computed by making an
arbitrary extension as a coherent sheaf, and forming the double dual.
This is then a sheaf of Azumaya algebras on $X$ by \cite[Thm.\ I.5.1(ii)]{gb}.
\end{proof}

\section{Proof of Theorem \ref{thm.main}}
The proof of Theorem \ref{thm.main} is carried out in several steps.

\

\emph{Step 1.} (Proposition \ref{Brbyfibr}) We obtain a nonempty
open subscheme $X^\circ$ of $X$, a finite Galois extension
$K$ of $k$, and a sequence of elements
$$
(\alpha_1, \ldots, \alpha_N)\subset \Br(X^\circ_K)
$$ 
for some $N$
which generate a subgroup of $\Br(X^\circ_{\bar k})$ containing
$\Br(X_{\bar k})[n]$.
We obtain an \'etale covering $Y^\circ\to X^\circ$, such that
each $\alpha_i$ is given by an explicit 2-cocycle for the \'etale cover
$Y^\circ_K\to X^\circ_K$.

\

\emph{Step 2.} (Proposition \ref{prop.relation}) Given
$\alpha\in\Br(X^\circ_K)$ defined by an explicit cocycle,
we provide an effective procedure to test whether $\alpha$
vanishes in $\Br(X_{\bar k})$,
and in case of vanishing, to produce a $1$-cochain lift of the
cocycle, defined over some effective extension of $K$.
We use this procedure in two ways.
\begin{itemize}
\item[(i)] By repeating Step 1 with another open subscheme
$\widetilde{X}^\circ$, with $X\smallsetminus(X^\circ\cup \widetilde{X}^\circ)$
of codimension 2 (or empty), to identify the geometrically \emph{unramified}
Brauer group elements, i.e., those in the
image of $\Br(X_K)\to \Br(X^\circ_K)$ after possibly extending $K$.
\item[(ii)] To identify those $\alpha$ such that
$\alpha$ and ${}^g\alpha$ have the same image in $\Br(X_{\bar k})$ for all
$g\in \Gal(K/k)$.
Again after possibly extending $K$ (remaining finite Galois over $k$),
we may suppose that all such $\alpha$ satisfy
$\alpha={}^g\alpha$ in $\Br(X_K)$ for all $g\in \Gal(K/k)$.
\end{itemize}
The result is a sequence of elements
$$
(\alpha'_1,\ldots,\alpha'_M) \subset \Br(X_K)[n]^{\Gal(K/k)},
$$
each given by
a cocycle over $X^\circ_K$ as well as one over $\widetilde{X}^\circ_K$,
generating
$\Br(X_{\bar k})[n]^{\Gal(\bar k/k)}$.

\

\emph{Step 3.}
(Proposition \ref{prop.extend})
Combine the data from the Galois invariance of the $\alpha'_i$
and the alternate representation over $\widetilde{X}^\circ_K$ to obtain
cocycle representatives of each $\alpha'_i$ defined over the
complement of a codimension 2 subset of $X$, as well as cochains
there that encode the Galois invariance.

\

\emph{Step 4.} (Proposition \ref{prop.descent})
For every Galois-invariant $n$-torsion element of $\Br(X_{\bar{k}})$,
with representing cocycle defined over $K$ obtained in Step 3,
compute the obstruction to the existence of an element of
$\Br(X)$ having the same image class in $\Br(X_{\bar{k}})$.
When the obstruction vanishes, produce a cocycle representative
of such an element of $\Br(X)$, defined over the complement of
a codimension 2 subset of $X$.
Each such element of $\Br(X)$ will be unique up to an element of
$\ker(\Br(X)\to \Br(X_K))$,
the algebraic part of the Brauer group, which has been treated in
\cite{KTeff}.

\

\emph{Step 5.} From the cocycle representatives of elements of
$\Br(X)$ obtained in Step 4, produce sheaves of
Azumaya algebras defined globally on $X$ (Lemma \ref{lem.effAz}).

\

\emph{Step 6.} Compute local invariants.
A sheaf of Azumaya algebras may be effectively converted to a collection
of representing $2$-cocycles, each for a finite \'etale covering of
some $U_i$ with $(U_i)$ a Zariski covering of $X$
(\cite[Thm.\ I.5.1(iii), 5.10]{gb}).
Then we are reduced to the local analysis described in \cite[\S9]{KTeff}.

\section{Generators of $\Br(X_{\bar k})[n]$ by fibrations}
For the first step,
we produce generators of the $n$-torsion in the Brauer group
of $\overline{X}:=X_{\bar k}$.
Starting from $X\subset\PP^N$, a general projection to $\PP^1$
yields, after replacing $X$ by its blow-up at finitely
many points, a fibration
\begin{equation}
\label{fibrwithsection}
f\colon X\to \PP^1
\end{equation}
with geometrically connected fibers.
By removing the exceptional divisors from the codimension 2 complement
in Step 5 and viewing it as a codimension 2 complement of $X$,
the proof of Theorem \ref{thm.main} is reduced to the case that
$f$ as in \eqref{fibrwithsection} exists.

Notice that, given a
finite set of divisors on $X$, \eqref{fibrwithsection} may be chosen so that
each of these divisors maps dominantly to $\PP^1$.

That the $n$-torsion in the Brauer group of a smooth projective surface
over $\bar k$ may be computed using a fibration is standard.
We include a sketch of a proof, for completeness.

\begin{prop}
\label{Brbyfibr}
Let $X$ be a smooth projective geometrically irreducible surface over a number field $k$,
and let $f\colon X\to \PP^1$ be a nonconstant morphism with connected geometric fibers,
both given by explicit equations.
Let $n$ be a given positive integer.
Then there exist, effectively:
\begin{itemize}
\item[(i)] a finite Galois extension $K$ of $k$,
\item[(ii)] a nonempty open subset $S\subset \PP^1$,
\item[(iii)] an \'etale covering $S'\to S$,
\item[(iv)] $2$-cocycles of rational functions for the covering
$X_K\times_{\PP^1_K}S'_K\to X_K\times_{\PP^1_K}S_K$,
\end{itemize}
such that $\Br(X_{\bar k}\times_{\PP^1_{\bar k}}S_{\bar k})[n]$ is spanned by the
classes of the $2$-cocycles, base-extended to $\bar k$.
\end{prop}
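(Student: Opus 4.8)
The plan is to compute $\Br(X_{\bar k}\times_{\PP^1_{\bar k}}S_{\bar k})[n]$ by combining the exact sequences of Section~\ref{sec.picardschemes} with Lemma~\ref{lem.H1Cmun}, and then to track each step effectively. First I would choose $S\subset\PP^1$ over which $f$ is smooth (the discriminant locus is effectively computable from the equations), so that $f_S\colon X_S\to S$ is smooth projective of relative dimension~1 with geometrically connected fibers. Possibly shrinking $S$ further and extending $k$ to a finite Galois $K$, I would arrange that $f$ admits a multisection of degree prime to $n$ defined over $K$ (a general hyperplane section does the job, after passing to the splitting field of its fiber over a generic point), so the integer $d$ in \eqref{eqn.withd} becomes prime to $n$; hence \eqref{eqn.withd} gives $H^1(S,\Pic^0_{X_S/S})[n]\eqto H^1(S,\Pic_{X_S/S})[n]\cong\Br(X_S)[n]$ via \eqref{eqn.lerayexaseq}.

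Next I would reduce to a computation with $\mu_n$-coefficients. By \eqref{eqn.ntorsion} the group $H^1(S,\Pic^0_{X_S/S})[n]$ is a quotient of $H^1(S,\Pic_{X_S/S}[n])$, so it suffices to produce cocycle representatives for classes in the latter group and then identify which ones survive. The sheaf $\Pic_{X_S/S}[n]$ over $S_{\mathrm{et}}$ is a finite \'etale group scheme; after further shrinking $S$ and enlarging $K$ I would trivialize it, identifying it with a constant sheaf built out of copies of $\mu_n$ (via the Weil pairing on the Jacobian of the generic fiber, together with a polarization), so that $H^1(S_K,\Pic_{X_S/S}[n])$ becomes a direct sum of groups of the form $H^1(S'_K,\mu_n)$ for $S'\to S$ the \'etale cover over which the $n$-torsion of the Jacobian is constant. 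This is where Lemma~\ref{lem.H1Cmun} enters: classes in $H^1$ of an open curve with $\mu_n$-coefficients are detected by their residues at the missing points, and the "if" direction of part~(ii) gives an explicit Kummer-type construction $r\mapsto r^{1/n}$ of a cocycle of rational functions realizing any prescribed residue datum that is killed in the appropriate quotient of $\Div/n$. Running over a spanning set of residue data yields explicit $2$-cocycles of rational functions for the cover $X_K\times_{\PP^1_K}S'_K\to X_K\times_{\PP^1_K}S_K$ whose classes, base-changed to $\bar k$, span $\Br(X_{\bar k}\times_{\PP^1_{\bar k}}S_{\bar k})[n]$.

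Finally I would assemble effectivity. Every step above — computing the bad locus of $f$, finding a prime-to-$n$ multisection and its field of definition, computing the generic Jacobian and its $n$-torsion, trivializing $\Pic_{X_S/S}[n]$ over an explicit $S'/S$, producing the relevant divisor classes on the curves in play, and forming the $n$th-root extension — is an explicit algebro-geometric construction with an a~priori bound in terms of the input equations, by the effective tools in \cite{KTeff} (for manipulating divisors and line bundles on smooth schemes) together with effective Galois theory. Passing from a class in $H^1(S,\Pic_{X_S/S}[n])$ to one in $H^1(S,\Pic_{X_S/S})$ and then to $\Br(X_S)$ via the Leray edge map \eqref{eqn.lerayexaseq} is the translation from cohomology classes to genuine $2$-cocycles of $\G_m$-valued rational functions, which is carried out by unwinding the spectral sequence as in Lemma~\ref{lem.effmv}. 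The main obstacle, I expect, is the bookkeeping needed to make the trivialization of $\Pic_{X_S/S}[n]$ and the identification of $H^1(S,\Pic_{X_S/S}[n])$ explicit and effective: one must descend from the geometric picture (where the $n$-torsion is constant and the Weil pairing is available) to cocycles defined over the finite extension $K$, while controlling the degree of $K$ and keeping the cocycle entries as honest rational functions on the relevant covers; the cohomological content is standard, but organizing it so that everything is produced by an algorithm with an a~priori running-time bound is the delicate part.
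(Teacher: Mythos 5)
Your overall route is the paper's: restrict to the smooth locus $S$, use the exact sequences \eqref{eqn.lerayexaseq}, \eqref{eqn.withd}, \eqref{eqn.ntorsion} to reduce the computation of $\Br(X_{\bar k}\times_{\PP^1_{\bar k}}S_{\bar k})[n]$ to $H^1(S_{\bar k},\Pic[n])$ of the relative Picard sheaf, trivialize that torsion sheaf over an explicit \'etale cover, compute the $H^1$ via Lemma \ref{lem.H1Cmun} and effective Jacobian arithmetic, and then unwind the Leray spectral sequence to produce $\G_m$-valued $2$-cocycles. However, one step as written would fail. You claim that after shrinking $S$ and enlarging $K$ you can ``arrange that $f$ admits a multisection of degree prime to $n$,'' so that the integer $d$ in \eqref{eqn.withd} becomes prime to $n$ and $H^1(S,\Pic^0)[n]\eqto H^1(S,\Pic)[n]$. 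But $d$ is the gcd of the degrees of \emph{all} multisections --- essentially the index of the generic fiber over $\bar k(t)$ --- and is a geometric invariant that no choice of hyperplane section or base extension can alter: a hyperplane section is a multisection of degree equal to the degree of a fiber in the given embedding, which you do not get to choose, and $\gcd(d,n)>1$ genuinely occurs (e.g.\ a genus-one fibration of index $3$ with $n=3$). When $\gcd(d,n)>1$, an $n$-torsion class in $H^1(S,\Pic)$ lifts only to an $nd$-torsion class in $H^1(S,\Pic^0)$, so your generators would miss part of $\Br[n]$. The repair is exactly the paper's parenthetical ``perhaps for a larger value of $n$'': carry out the entire computation at torsion level $nd$ (or any convenient multiple), which still surjects onto the $n$-torsion of the Brauer group through \eqref{eqn.ntorsion} and \eqref{eqn.withd}.

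Two smaller points. First, the residues at the punctures in Lemma \ref{lem.H1Cmun}(ii) detect $H^1(C,\mu_n)$ only modulo $H^1(D,\mu_n)$ of the compactification, so the $n$-torsion of the Jacobian of the complete cover must be computed as well; you mention Jacobian arithmetic only in the effectivity paragraph, but it is an essential source of generators and of the covering $S'$. Second, converting a $1$-cocycle valued in the relative Picard sheaf into a $2$-cocycle of rational functions is not accomplished by Lemma \ref{lem.effmv} (that is the Mayer--Vietoris gluing statement); it requires choosing divisor representatives of the Picard classes via an effective Tsen theorem and then trivializing, after a Zariski refinement of $S'$, the $3$-cocycle produced by the edge map of the Leray spectral sequence (cf.\ \cite[Prop.\ 6.1]{KTeff} and \cite[Exa.\ III.2.22(d)]{milne}). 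Your plan gestures at this translation but misattributes the tool and omits the $3$-cocycle/coboundary issue.
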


\begin{proof}
We let $S\subset \PP^1$ denote the maximal subset over which
$f$ is smooth, and $X^\circ=f^{-1}(S)$.
By the exact sequences of Section \ref{sec.picardschemes},
it suffices to carry out following tasks (perhaps for a larger value of $n$):
\begin{itemize}
\item[(1)] Compute $H^1(S_{\bar{k}},\Pic_{X^\circ_{\bar{k}}/S_{\bar{k}}}[n])$ by means of cocycles.
\item[(2)] Find divisors on $X_{\bar k}$ whose classes in
$\Pic(X^\circ_{\bar k}/S_{\bar k})$ represent the elements appearing in these cocycles.
\item[(3)] Find explicit $2$-cocycle representatives of elements
of $\Br(X^\circ_{\bar k})$ which correspond to these elements by
the isomorphism \eqref{eqn.lerayexaseq}.
\end{itemize}

The field $K$ is an explicit suitable extension, over which
the steps are carried out.
Step (1) is clear, since there is an explicit finite \'etale covering
$C\to S$ trivializing $\Pic_{X^\circ/S}[n]$.
Then there is a finite \'etale covering $S'\to C$, with
$S'_{\bar k}\to C_{\bar k}$ a product of cyclic \'etale degree $n$ covers,
such that $S'_{\bar k}\to S_{\bar k}$ trivializes
$H^1(S_{\bar k},\Pic_{X^\circ_{\bar k}/S_{\bar k}}[n])$.
(The proof of Lemma \ref{lem.H1Cmun} provides an effective procedure
to compute $S'$, using effective Jacobian arithmetic.)
Step (2) can be carried out effectively as described in
\cite[\S 4]{KTeff}, using an effective
version of Tsen's theorem (for the function field, this is
standard, see e.g.\ \cite{preu}, then apply Lemma \ref{lem.effBring}).
On the level of cocycles, the Leray spectral sequence
\eqref{eqn.lerayexaseq} gives rise to a 3-cocycle, and Step (3) can be
carried out as soon as this is represented as a coboundary, which
we have again possibly after making a Zariski refinement of $S'$
(cf.\ \cite[Exa.\ III.2.22(d)]{milne}).
An explicit description of the procedure to produce the 3-cocycle
using the Leray spectral sequence may be found in
\cite[Prop.\ 6.1]{KTeff}.
\end{proof}

\section{Relations among generators}
\label{relations}
In this section we show how to compare
elements of the Brauer group of a Zariski open subset of
a smooth projective surface $\overline{X}$ over $\bar k$,
under the assumption that the geometric Picard group
$\Pic(\overline{X})$ is
finitely generated, and $\overline{X}$ as well as a finite set of
divisors generating $\Pic(\overline{X})$ are explicitly given.
The method goes back to Brauer \cite{Bra28}, with refinements in \cite{Bra32}.

\begin{lemm}
\label{lemm.relation}
Let $X^\circ$ be a smooth quasi-projective geometrically irreducible surface over a number field $k$,
$Z^\circ\to X^\circ$ a finite \'etale morphism,
and $\beta\in Z^2(Z^\circ\to X^\circ, \G_m)$ a \v{C}ech cocycle representative
of an element $\alpha\in \Br(X^\circ)$.
We suppose $X^\circ$, $Z^\circ$ and $\beta$ are given by explicit
equations, respectively functions.
Let $n$ be a given positive integer, and $\gamma\in C^1(Z^\circ\to X^\circ, \G_m)$
a \v{C}ech cochain whose coboundary is equal to $n\cdot \beta$.
We suppose that $k$ contains the $n$-roots of unity, and that an identification
$\mu_n\simeq \Z/n\Z$ is fixed.
Then there exists, effectively, a finite group $G$, a finite \'etale morphism
$Y^\circ\to Z^\circ$,
a $G$-torsor structure on
$Y^\circ\to X^{\circ}$, a central extension of finite groups
\begin{equation}
\label{eq.gpext}
1\to \Z/n\Z\to H\to G\to 1,
\end{equation}
and a $1$-cochain $\delta\in C^1(Y^\circ\to X^{\circ},\G_m)$
such that image in
$\Br(X^{\circ})=H^2(X^{\circ},\G_m)$
of the induced element of $H^2(X^{\circ},\Z/n\Z)\simeq
H^2(X^{\circ},\mu_n)$ is equal to $\alpha$,
and the coboundary $\delta$ is the difference between
the latter and the given $\beta$ refined by
$Y^\circ\to Z^\circ$.
\end{lemm}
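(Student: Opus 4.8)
The plan is to transfer the problem to a Galois cover of $X^\circ$, where Čech cohomology becomes group cohomology, and then to use the classical correspondence between $2$-cocycles valued in $\mu_n\simeq\Z/n\Z$ and central extensions — the latter being Brauer's contribution. First I would reduce to the case that $Z^\circ\to X^\circ$ is a connected Galois cover. Passing to one connected component of $Z^\circ$ changes neither $\alpha$ nor the shape of the problem, since the inclusion of a component is a refinement of covers of $X^\circ$; and it is then harmless to pass to the Galois closure $W^\circ$ of this component over $X^\circ$, because the cover $Y^\circ$ eventually produced will lie over $W^\circ$, hence over $Z^\circ$, so that ``refined by $Y^\circ\to Z^\circ$'' remains unambiguous. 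For a $\Gamma$-torsor $W\to X^\circ$ one has $W\times_{X^\circ}W\cong W\times\Gamma$, and hence a canonical identification of the Čech complex $\check C^\bullet(W/X^\circ,F)$ with the inhomogeneous group-cochain complex $C^\bullet(\Gamma,F(W))$, compatible with differentials, for every abelian étale sheaf $F$. Under this identification (for $W=W^\circ$, $\Gamma=\Gal(W^\circ/X^\circ)$) the refinement of $\beta$ is a $2$-cocycle of $\Gamma$ valued in $\cO^*(W^\circ)$, and that of $\gamma$ a $1$-cochain with $d\gamma=\beta^{n}$ in multiplicative notation.

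The crucial step is to extract $n$-th roots of the finitely many units $\gamma(\sigma)$, $\sigma\in\Gamma$. Since $k\supset\mu_n$ and $\chara k=0$, adjoining an $n$-th root of a unit to a smooth $k$-variety is a finite étale cover; hence there is a finite étale $W'\to W^\circ$ over which each $\gamma(\sigma)$ becomes an $n$-th power, and, after replacing $W'$ by a suitable connected component and passing to the Galois closure over $X^\circ$, a connected Galois cover $Y^\circ\to X^\circ$ lying over $W^\circ$, with group $G$ surjecting onto $\Gamma$, on which one may choose a $1$-cochain $\eta\in C^1(G,\cO^*(Y^\circ))$ with $\eta^{n}=\gamma$. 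This is effective, using the effective computation of étale covers, of Galois closures, and of unit groups, together with Lemma \ref{lem.effBring} and the effective radical-extraction procedures already invoked above and in \cite{KTeff}. Put $\tilde\beta:=\beta\cdot(d\eta)^{-1}\in C^2(G,\cO^*(Y^\circ))$. It is a cocycle, being a product of cocycles, and
\[
\tilde\beta^{\,n}=\beta^{n}\,(d\eta)^{-n}=\beta^{n}\,\bigl(d(\eta^{n})\bigr)^{-1}=\beta^{n}\,(d\gamma)^{-1}=1;
\]
since $Y^\circ$ is connected and reduced and $\mu_n\subset k$, this forces $\tilde\beta$ to take values in the constant subgroup $\mu_n$. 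Via the fixed isomorphism $\mu_n\simeq\Z/n\Z$, and after normalizing, $\tilde\beta$ is a $\Z/n\Z$-valued group $2$-cocycle on $G$; take it as the definition of the central extension \eqref{eq.gpext}, and set $\delta:=\eta^{-1}\in C^1(Y^\circ\to X^{\circ},\G_m)$.

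It remains to verify the two assertions, which is bookkeeping in the Čech/group-cochain dictionary. The $G$-torsor $Y^\circ\to X^\circ$ together with \eqref{eq.gpext} induces, via the connecting map $H^1(X^{\circ},G)\to H^2(X^{\circ},\Z/n\Z)$ — the relevant edge homomorphism of the Hochschild--Serre spectral sequence for $Y^\circ\to X^\circ$ — the image in $H^2(X^{\circ},\mu_n)$ of $[\tilde\beta]\in H^2(G,\mu_n)=\check H^2(Y^\circ/X^\circ,\mu_n)$, because the tautological $G$-valued $1$-cocycle for $Y^\circ/X^\circ$ lifts, through any set-theoretic section of $H\to G$, precisely to the $2$-cocycle of $G$ defining $H$. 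Pushing forward along $\mu_n\hookrightarrow\G_m$, this image becomes
\[
[\tilde\beta]_{\G_m}=\bigl[\beta|_{Y^\circ}\cdot(d\eta)^{-1}\bigr]_{\G_m}=[\beta|_{Y^\circ}]_{\G_m}=\alpha,
\]
since $(d\eta)^{-1}$ is a coboundary and $Y^\circ\to Z^\circ$ is a refinement compatible with the maps from Čech to derived-functor $H^2(-,\G_m)$. Finally $d\delta=d(\eta^{-1})=(d\eta)^{-1}=\tilde\beta\cdot(\beta|_{Y^\circ})^{-1}$, which is exactly the difference between the $\G_m$-cocycle underlying the induced $\mu_n$-class and the refinement of $\beta$, as required.

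The main obstacle is the effective construction of $Y^\circ$: it must be produced simultaneously connected, Galois over $X^\circ$, lying over $Z^\circ$, and trivializing the $n$-th roots of the components of $\gamma$, after which one extracts $\eta$ and reads off $G$, $H$, and $\delta$. Connectedness of $Y^\circ$ is the small but essential point, as it is what makes $\tilde\beta$ genuinely $\Z/n\Z$-valued rather than merely $\mu_n(Y^\circ)$-valued; everything else rests on the effective étale-cover, Galois-closure, and unit-group machinery developed earlier and in \cite{KTeff}.
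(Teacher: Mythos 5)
Your proposal is correct and follows essentially the same route as the paper's (much terser) proof: pass to a finite \'etale cover on which $\gamma$ admits an $n$th root $\eta$, observe that $\beta\cdot(d\eta)^{-1}$ is then $\mu_n$-valued, refine to an irreducible Galois $G$-cover so that this cocycle becomes a group $2$-cocycle defining the central extension \eqref{eq.gpext}, and take $\delta=\eta^{-1}$. The only difference is organizational (you reduce to a connected Galois cover at the outset rather than at the end), and your added detail on connectedness forcing constancy of the $\mu_n$-valued cocycle is exactly the point the paper leaves implicit.
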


\begin{proof}
There exists a finite \'etale cover $Y^\circ$ of $Z^\circ$,
such that the restriction of $\gamma$ to
$Z^\circ\times_{X^\circ}Z^\circ$ is an $n$th power.
It follows that the restriction of $\beta$
differs by a coboundary from an element of the image of
$Z^2(Y^\circ\to X^\circ, \mu_n)$.
These can be produced explicitly.
Upon further refinement of $Y^\circ$, we may suppose that
$Y^\circ$ is irreducible, $Y^\circ\to X^\circ$ is a Galois $G$-covering
for some finite group $G$,
and then the cocycle condition is precisely the condition to be
a 2-cocycle for the group cohomology of $G$ with values in
$\Z/n\Z$ (with trivial $G$-action on $\Z/n\Z$).
This gives us \eqref{eq.gpext}.
\end{proof}

\begin{prop}
\label{prop.relation}
Let $X$ be a smooth projective geometrically irreducible surface over a number field $k$ with finitely generated
geometric Picard group $\Pic(X_{\bar k})$.
Let $X^\circ$ be a nonempty open subscheme,
$\pi\colon Y^\circ\to X^\circ$ an \'etale cover,
and $\beta\in Z^2(Y^\circ\to X^\circ, \G_m)$ a \v{C}ech cocycle representative
of an element $\alpha\in \Br(X^\circ)$.
Let $n$ be a given positive integer, and $\gamma\in C^1(Y^\circ\to X^\circ, \G_m)$
a \v{C}ech cochain whose coboundary is equal to $n\cdot \beta$.
We suppose $X$, a finite set of divisors generating $\Pic(X_{\bar k})$,
$X^\circ$, $Y^\circ$, $\pi$, $\beta$, and $\gamma$ are given by explicit
equations, respectively functions.
Then there exists an effective procedure to determine whether
$\alpha_{\bar k}=0$ in $\Br(X^\circ_{\bar k})$, and in case $\alpha_{\bar k}=0$,
to produce a finite extension $K$ of $k$, a Zariski open covering
$(Y^\circ_i)$ of $Y^\circ$,
and a $1$-cochain
$\delta\in C^1(\coprod (Y^{\circ}_i)_K\to X^\circ_K,\G_m)$,
whose coboundary is equal to the
base-extension to $K$ of the refinement of $\beta$ by
$\coprod Y^\circ_i\to Y^\circ$.
\end{prop}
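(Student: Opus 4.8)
The plan is to reduce, using Lemma~\ref{lemm.relation}, to a statement with $\mu_n$-coefficients; to recast $\alpha_{\bar k}=0$ as the membership of a $\mu_n$-cocycle class in the finite subgroup of $H^2(X^\circ_{\bar k},\mu_n)$ coming from $\Pic(X^\circ_{\bar k})$; and to decide that membership by fibring $X$ over a curve and appealing to effective cohomology of curves, which is the classical method of Brauer. Since the condition $\alpha_{\bar k}=0$ in $\Br(X^\circ_{\bar k})$ is unaffected by a finite extension of $k$, I would work over a suitable finite extension throughout, the last of which becomes the field $K$ of the statement; in particular I would assume $\mu_n\subset k$ with a fixed identification $\mu_n\simeq\Z/n\Z$.

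Applying Lemma~\ref{lemm.relation} with $Z^\circ:=Y^\circ$ yields, effectively: a finite group $G$, a finite \'etale cover $\widetilde Y^\circ\to Y^\circ$ with $\widetilde Y^\circ\to X^\circ$ a Galois $G$-torsor, a central extension $1\to\Z/n\Z\to H\to G\to1$ whose inflated class is represented by an explicit $\mu_n$-valued $2$-cocycle $c\in Z^2(\widetilde Y^\circ\to X^\circ,\mu_n)$ with image $\alpha$ in $\Br(X^\circ)$, and a $1$-cochain $\delta_1\in C^1(\widetilde Y^\circ\to X^\circ,\G_m)$ satisfying $d\delta_1=c\cdot\beta^{-1}$, the cocycle $\beta$ being pulled back to $\widetilde Y^\circ$. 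Thus $\beta=c\cdot d(\delta_1^{-1})$: any $\G_m$-valued $1$-cochain trivializing $c$ on a finite \'etale cover refining $\widetilde Y^\circ$ yields, after multiplication by $\delta_1^{-1}$, one trivializing $\beta$, and feeding the latter into Lemma~\ref{lem.effBring} with the given $\pi\colon Y^\circ\to X^\circ$ in the role of $Z\to X$ produces a $1$-cochain on a Zariski refinement of $Y^\circ$ whose coboundary is the prescribed refinement of $\beta$. On the other hand, writing $c_{\bar k}\in H^2(X^\circ_{\bar k},\mu_n)$ for the class obtained from $c$, its image in $\Br(X^\circ_{\bar k})=H^2(X^\circ_{\bar k},\G_m)$ (torsion, because $X^\circ_{\bar k}$ is regular) is $\alpha_{\bar k}$, so by the Kummer exact sequence
\[
0\to\Pic(X^\circ_{\bar k})/n\to H^2(X^\circ_{\bar k},\mu_n)\to\Br(X^\circ_{\bar k})[n]\to0
\]
we have $\alpha_{\bar k}=0$ in $\Br(X^\circ_{\bar k})$ if and only if $c_{\bar k}$ lies in the image of $\Pic(X^\circ_{\bar k})/n$.

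The relevant subgroup is explicit and finite. Writing $X^\circ=X\setminus D$, the group $\Pic(X^\circ_{\bar k})$ is the quotient of $\Pic(X_{\bar k})$ by the classes of the irreducible components of $D$, hence finitely generated with explicit generators $L_1,\dots,L_r$ coming from the given divisors generating $\Pic(X_{\bar k})$. For each $i$, using that divisors on smooth schemes are effectively locally principal (\cite[\S7]{KTeff}), $L_i$ acquires over a finite extension an explicit $\G_m$-transition cocycle $g^{(i)}$ on a Zariski cover; adjoining $n$th roots over a further extension and refining to an \'etale cover gives a $1$-cochain $h^{(i)}$ with $(h^{(i)})^n=g^{(i)}$, and the image of $L_i$ in $H^2(X^\circ_{\bar k},\mu_n)$ is $\ell_i:=[dh^{(i)}]$, with $dh^{(i)}$ visibly a $\G_m$-coboundary. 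Therefore $\alpha_{\bar k}=0$ if and only if there is a tuple $(a_i)\in(\Z/n\Z)^r$ for which the $\mu_n$-valued $2$-cocycle $\omega:=c\cdot\prod_i(dh^{(i)})^{-a_i}$ is a $\mu_n$-coboundary, say $\omega=d\varepsilon$; and then $c=d(\varepsilon\cdot\prod_i(h^{(i)})^{a_i})$, so $\beta=d(\varepsilon\cdot\prod_i(h^{(i)})^{a_i}\cdot\delta_1^{-1})$ trivializes $\beta$, and Lemma~\ref{lem.effBring} delivers the required $1$-cochain. Since $(\Z/n\Z)^r$ is finite, the whole problem reduces to the following task: given a $\mu_n$-valued \v{C}ech $2$-cocycle $\omega$ for a finite \'etale cover of $X^\circ_{\bar k}$, decide effectively whether $[\omega]=0$ in $H^2(X^\circ_{\bar k},\mu_n)$, and, when so, exhibit a trivializing $1$-cochain.

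For this last task I would fibre, which is where the method of Brauer enters and where I expect the real work to lie. Covering $X^\circ_{\bar k}$ by finitely many smooth affine opens, each carrying a nonconstant morphism to $\A^1$ (alternatively, replacing $X$ by a blow-up with a morphism to $\PP^1$ with geometrically connected fibres for which every component of $D$ is dominant, as in the remark after \eqref{fibrwithsection}, and reducing as in the passage from Theorem~\ref{thm.main} to its fibred case), one brings in, over each piece, the Leray spectral sequence of the fibration: over the open base over which the fibration is smooth its outer terms are $H^1$ of that base with coefficients in the relative Picard scheme and its $n$-torsion, and the remaining contributions are controlled by localization (Gysin) sequences along the boundary divisor, the special fibres, and — in the Mayer--Vietoris assembly of the pieces (cf. Lemma~\ref{lem.effmv}) — the overlaps. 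Tracing $\omega$ through these, the computation of $H^2(X^\circ_{\bar k},\mu_n)$ together with the position of $[\omega]$ in it, and the construction of a trivializing $1$-cochain when $[\omega]=0$, reduce to $\mu_n$-cohomology of curves over $\bar k$ and to $H^1$ of curves with finite \'etale coefficient sheaves arising from (generalized) relative Picard schemes — effective by the proof of Lemma~\ref{lem.H1Cmun}, the exact sequences of Section~\ref{sec.picardschemes}, and effective Jacobian arithmetic, exactly as in Proposition~\ref{Brbyfibr} and \cite[Prop.\ 6.1]{KTeff}. The main obstacle is the cocycle-level execution: realizing the Leray, Gysin, and Mayer--Vietoris spectral sequences by explicit \v{C}ech cochains and threading all the intermediate Zariski and \'etale refinements, so that the trivializing $1$-cochain can finally be pulled back to a common refinement of the originally given cover $Y^\circ$ and combined with $\varepsilon$, the $h^{(i)}$, and $\delta_1$ to give the cochain $\delta$ over the finite extension $K$.
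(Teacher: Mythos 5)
Your opening moves track the paper's: apply Lemma~\ref{lemm.relation} to replace $\beta$ (up to an explicit $\G_m$-coboundary) by a $\mu_n$-valued cocycle attached to a $G$-torsor $\widetilde Y^\circ\to X^\circ$ and a central extension $1\to\Z/n\Z\to H\to G\to 1$, and dispose of the contribution of $\Pic(X^\circ_{\bar k})/n$ via the Kummer sequence (the paper instead shrinks $X^\circ$ so that $\Pic(X^\circ_{\bar k})=0$; your finite enumeration over $(\Z/n\Z)^r$ is an acceptable variant). The reduction to the single task ``decide whether a $\mu_n$-valued $2$-cocycle $\omega$ is trivial in $H^2(X^\circ_{\bar k},\mu_n)$ and, if so, exhibit a trivializing $1$-cochain'' is also correct.

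But that task is the entire content of the proposition, and your last paragraph does not prove it: it proposes to compute $H^2$ of an open surface at the cocycle level by assembling Leray, Gysin and Mayer--Vietoris data over a fibration, and then concedes that ``the main obstacle is the cocycle-level execution.'' That execution is exactly what is missing --- one would have to control $H^1(T,R^1\tau_*\mu_n)$ with a nonconstant locally constant coefficient sheaf, the $d_2$ differentials, and the patching over overlaps, all explicitly enough to output a trivializing cochain --- and nothing in the paper's toolkit does this for $H^2$ of a surface. The idea you are missing is that one never needs to compute $H^2(X^\circ_{\bar k},\mu_n)$: because $\omega$ (after the reduction of Lemma~\ref{lemm.relation}) is \emph{inflated} from the class of the extension in $H^2(G,\mu_n)$, the low-degree exact sequence
\[
0\to H^1(G,\mu_n)\to H^1(X^\circ_{\bar k},\mu_n)\to H^1(Y^\circ_{\bar k},\mu_n)^G\to
H^2(G,\mu_n)\to H^2(X^\circ_{\bar k},\mu_n)
\]
shows its vanishing is equivalent to the extension class lying in the image of the transgression, i.e.\ to the existence of a degree-$n$ cyclic \'etale cover of $\overline{Y}^\circ$ carrying an $H$-torsor structure over $\overline{X}^\circ$ compatible with the $G$-torsor. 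This converts the problem into enumerating the finite group $H^1(\overline{Y}^\circ,\mu_n)$ --- a first, not second, cohomology group of a surface --- which \emph{does} reduce effectively to curves: fibre $\overline{Y}^\circ$ over $\PP^1$, use that $H^2$ of the affine base vanishes so the Leray sequence for $H^1$ has only two terms, compare with the generic fibre, and compute via $n$-torsion of Jacobians and Lemma~\ref{lem.H1Cmun}. Without this descent from $H^2$ to $H^1$ (or a genuine substitute for the cocycle-level computation of $H^2$ of a surface), your argument has a gap at its central step.
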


\begin{proof}
It suffices to prove the result
after an effective shrinking of $X^\circ$ and extension of the
base field, by Lemma \ref{lem.effBring}
(we note that the Zariski open subsets that are produced
in the proof may be taken to be Galois invariant) and, by
Lemma \ref{lem.cobounda}, after a refinement of the given cover.
So we may suppose that $\pi$ is finite, $\Pic(X^\circ_{\bar k})=0$,
the field $k$ contains the $n$th roots of unity
(with a fixed identification $\Z/n\Z\simeq \mu_n$), and
the cocycle $\beta$ takes its values in
$\mu_n$ (Lemma \ref{lemm.relation}) and is the universal one for a
$G$-torsor structure on $Y^\circ\to X^\circ$ and an extension
\eqref{eq.gpext}.
Without loss of generality, $Y^\circ$ is geometrically irreducible,
and the class of the extension in $H^2(G,\Z/n\Z)$
(group cohomology for $\Z/n\Z$ with trivial $G$-action)
is not annihilated by any positive integer smaller than $n$.
It follows from $\Pic(X^\circ_{\bar k})=0$ that
$\alpha_{\bar k}=0$ in $\Br(X^\circ_{\bar k})$ if and only if
the class of $\beta$ is $0$ in $H^2(X^\circ_{\bar k},\mu_n)$.

The Leray spectral sequence gives rise to an exact sequence
$$
0\to H^1(G,\mu_n)\to H^1(X^\circ_{\bar k},\mu_n)\to H^1(Y^\circ_{\bar k},\mu_n)^G\to
H^2(G,\mu_n)\to H^2(X^\circ_{\bar k},\mu_n).
$$
It follows that the class of $\beta$ is $0$ in $\Br(X^\circ_{\bar k})$
if and only if there exists
an irreducible finite \'etale covering $\overline{V}^\circ$ of
$\overline{Y}^\circ:=Y^\circ_{\bar k}$,
cyclic of degree $n$, admitting a structure of
$H$-torsor over $\overline{X}^\circ:=X^\circ_{\bar k}$
compatible with the $G$-torsor structure on $\overline{Y}^\circ$.
This can be tested, provided that we can explicitly generate all
degree $n$ cyclic \'etale coverings of $\overline{Y}^\circ$.
If we have such a covering,
we take $K$ so that the covering and $H$-torsor structure are defined over $K$,
then the restriction of $\beta$ to the covering is explicitly a
coboundary.

Choose an explicit fibration $\tau\colon \overline{Y}^\circ\to\PP^1$.
Since we may shrink $\overline{Y}^\circ$, we may replace $\overline{Y}^\circ$
by the preimage of Zariski open $T\subsetneq\PP^1$, chosen so that the
geometric fibers are complements of exactly some number $\ell$
of distinct points in a smooth irreducible curve of some genus $g$,
these $\ell$ points being the fibers of a finite \'etale cover of $T$.

By the Leray spectral sequence, we have a commutative diagram
with exact rows
$$
\xymatrix@C=15pt{
0 \ar[r] &
{\mathcal{O}(T)^*/(\mathcal{O}(T)^*)^n} \ar[r] \ar[d] &
H^1(\overline{Y}^\circ, \mu_n) \ar[r] \ar[d] &
H^0(T, R^1\tau_*\mu_n) \ar[r] \ar[d] &
0 \\
0 \ar[r] &
{\bar k(T)^*/(\bar k(T)^*)^n} \ar[r] &
H^1(\overline{Y}^\circ_{\bar k(T)},\mu_n) \ar[r] &
H^0(\bar k(T), R^1(\tau_{\bar k(T)})_*\mu_n) \ar[r] &
0
}$$
where we have used $H^2(T,\mu_n)=H^2(\bar k(T),\mu_n)=0$.
Arguing as in \cite[proof of Lemma III.3.15]{milne} we see that
$R^1\tau_*\mu_n$ is a locally constant torsion sheaf with finite fibers.
It follows that the right-hand vertical map is an isomorphism.
By the snake lemma, the leftmost two vertical maps are injective
and have isomorphic cokernels.

Let $\overline{Y}_{\bar k(T)}$ be a smooth projective curve
containing $\overline{Y}^\circ_{\bar k(T)}$ as an open subscheme.
We can find generators of
$H^1(\overline{Y}_{\bar k(T)},\mu_n)$,
modulo $H^1(\bar k(T),\mu_n)=\bar k(T)^*/(\bar k(T)^*)^n$
by computing the $\bar k(T)$-rational $n$-torsion points of the Jacobian
and (using effective Tsen's theorem) lifting these to divisor representatives.
Lemma \ref{lem.H1Cmun} supplies additional generators of
$H^1(\overline{Y}^\circ_{\bar k(T)},\mu_n)$:
for elements of $\bigoplus \Z/n\Z$ (sum over points of
$\overline{Y}_{\bar k(T)}\smallsetminus \overline{Y}^\circ_{\bar k(T)}$)
of weighted (by degree) sum $0$, we test whether a fiber of a multiplication by $n$
map of Jacobians has a $\bar k(T)$-rational point (and again use effective Tsen's
theorem to produce divisor representatives).
Each generator of $H^1(\overline{Y}^\circ_{\bar k(T)},\mu_n)$, modulo
$H^1(\bar k(T),\mu_n)$, may be effectively lifted to
$H^1(\overline{Y}^\circ,\mu_n)$ by a diagram chase, using the
isomorphism of the cokernels of
left two vertical morphisms in the diagram.
\end{proof}

\section{Galois invariants in $\Br(X_{\bar k})[n]$}
\label{sec.galinv}

In this section, we focus on the problem of deciding whether a
Galois invariant element in $\Br(X_{\bar k})[n]$ lies in
the image in $\Br(X_{\bar k})$ of an element of
$\Br(X_K)^{\Gal(K/k)}$.
Concretely, this amounts to adjusting elements of
$\Br(X_K)$ by elements of $\Br(K)$, when possible, so that they
become $\Gal(K/k)$-invariant.
This will be done effectively.

\begin{prop}
\label{prop.extend}
Let $X$ be a smooth projective geometrically irreducible surface over
a number field $k$.
Let $K$ be a finite Galois extension of $k$.
Let $X^\circ$ and $\widetilde{X}^\circ$ be open subschemes whose union is
the complement of a subset that has codimension $2$ (or is empty),
$Y^\circ\to X^\circ$ and
$\widetilde{Y}^\circ\to\widetilde{X}^\circ$ \'etale coverings,
$\beta\in Z^2(Y^\circ_K\to X^\circ_K,\G_m)$ and
$\tilde\beta\in Z^2(\widetilde{Y}^\circ_K\to\widetilde{X}^\circ_K,\G_m)$
cocycles,
and
$\delta_g\in C^1(Y^\circ_K\to X^\circ_K,\G_m)$
having coboundary $\beta-{}^g\beta$ for every
$g\in\Gal(K/k)$.
Assume that $\beta$ and $\tilde\beta$ give rise to the same
class in $\Br((X^\circ\cap \widetilde{X}^\circ)_{\bar{k}})$.
Then we may effectively produce an open subscheme
$\widehat{X}\subset X$, containing $X^\circ$,
whose complement has codimension $2$ (or is empty),
an \'etale cover $\widehat{Y}\to \widehat{X}$,
a finite extension $L$ of $K$, Galois over $k$,
a cocycle $\hat\beta\in Z^2(\widehat{Y}\to \widehat{X},\G_m)$
giving rise to the same class as $\beta$
in $\Br(X^\circ_{\bar{k}})$,
and cochain $\hat\delta_g\in C^1(\widehat{Y}\to \widehat{X},\G_m)$
having coboundary $\hat\beta-{}^g\hat\beta$,
for all $g\in\Gal(L/k)$.
\end{prop}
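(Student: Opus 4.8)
The plan is to glue the two given cocycle representatives along the overlap, using the fact that they define the same geometric class there, and then extend over the codimension-2 locus. First I would work over $\bar k$ to understand the gluing, then descend to a finite extension $L$.

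\begin{proof}[Proof proposal]
The plan is the following. By purity for the Brauer group, the class of $\beta$ (resp.\ $\tilde\beta$) in $\Br(X^\circ_{\bar k})$ (resp.\ $\Br(\widetilde{X}^\circ_{\bar k})$) extends uniquely to an element of $\Br(X_{\bar k})$, and the hypothesis that they agree on the overlap forces these two extensions to coincide. So there is a single class $\bar\alpha\in\Br(X_{\bar k})$ restricting to both. The issue is to realize $\bar\alpha$ by an explicit cocycle over a controlled open set, defined over a finite extension $L/k$, while carrying along Galois-descent cochains.

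First I would compare $\beta$ and $\tilde\beta$ over $(X^\circ\cap\widetilde X^\circ)_K$ (after possibly enlarging $K$ to a field over which both classes are represented over this overlap): since they give the same class in $\Br((X^\circ\cap\widetilde X^\circ)_{\bar k})$, Proposition~\ref{prop.relation} applied to the difference cocycle produces, after a finite extension and a Zariski refinement, an explicit $1$-cochain $\epsilon$ on the overlap with coboundary $\beta-\tilde\beta$ (restricted to a common refinement of the two covers). Now I would invoke the Mayer--Vietoris patching of Lemma~\ref{lem.effmv}, with $X_1=X^\circ_K$, $X_2=\widetilde X^\circ_K$, the covers $Y^\circ_K$ and $\widetilde Y^\circ_K$, and $\delta=\epsilon$: the compatibility relation in that lemma is exactly the statement that $\epsilon$ cobounds $\beta/\tilde\beta$ on triple overlaps, so we obtain a single cocycle $\hat\beta$ for the cover $Y^\circ_K\amalg\widetilde Y^\circ_K\to(X^\circ\cup\widetilde X^\circ)_K$ whose restrictions recover $\beta$ and $\tilde\beta$, hence whose geometric class is $\bar\alpha$. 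Take $\widehat X$ to be $X^\circ\cup\widetilde X^\circ$ (possibly shrunk to keep smoothness of auxiliary covers), whose complement has codimension $2$ by hypothesis; this is $\supseteq X^\circ$ as required.

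It remains to produce the descent cochains $\hat\delta_g$ with coboundary $\hat\beta-{}^g\hat\beta$ for $g\in\Gal(L/k)$. Over $X^\circ_K$ this is given by the hypothesized $\delta_g$. On $\widetilde X^\circ_K$ we do not a priori have such a cochain, but the restriction of $\hat\beta-{}^g\hat\beta$ to $\widetilde X^\circ_{\bar k}$ is cohomologous to zero: indeed the class of $\hat\beta$ in $\Br(X_{\bar k})$ need not be Galois invariant, but the difference $\hat\beta-{}^g\hat\beta$ agrees on the overlap with $\beta-{}^g\beta=\partial\delta_g$, so by the same purity argument its geometric class vanishes on all of $\widetilde X^\circ$. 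Hence Proposition~\ref{prop.relation}, applied to $\hat\beta-{}^g\hat\beta$ restricted to $\widetilde X^\circ_K$ (for each of the finitely many $g$, and enlarging $K$ to some finite Galois $L/k$ to accommodate all of them), yields an explicit $1$-cochain $\tilde\delta_g$ on (a refinement of) $\widetilde X^\circ_L$ with the right coboundary. Then on the overlap, $\delta_g$ and $\tilde\delta_g$ differ by a $1$-cocycle, i.e.\ a line bundle, which (since $\widehat X$ is smooth and its complement has codimension $2$) extends; twisting $\tilde\delta_g$ by a suitable Zariski-local trivialization as in Lemma~\ref{lem.cobounda}, after a further Zariski refinement of $\widehat Y:=Y^\circ_L\amalg\widetilde Y^\circ_L$ one can arrange $\delta_g$ and $\tilde\delta_g$ to glue into a single $\hat\delta_g\in C^1(\widehat Y\to\widehat X,\G_m)$ with coboundary $\hat\beta-{}^g\hat\beta$. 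All steps are effective since they rest on Proposition~\ref{prop.relation}, Lemma~\ref{lem.effmv}, and effective local trivialization of $\G_m$-cocycles.

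The main obstacle is the bookkeeping of the gluing: ensuring that the various Zariski refinements (of $Y^\circ$, of $\widetilde Y^\circ$, and those introduced when applying Proposition~\ref{prop.relation} to the $g$-differences) can be chosen compatibly and Galois-equivariantly, so that the final cover $\widehat Y\to\widehat X$ and the cochains $\hat\delta_g$ are simultaneously defined over a single finite extension $L/k$ that is Galois over $k$. The cocycle identities to be checked are routine manipulations of \v Cech coboundaries, but verifying that the patched $\hat\delta_g$ genuinely has coboundary $\hat\beta-{}^g\hat\beta$ globally (not just on each piece) requires care with the overlap terms, exactly in the spirit of Lemma~\ref{lem.effmv}.
\end{proof}
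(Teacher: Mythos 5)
Your overall strategy matches the paper's: trivialize the difference of $\beta$ and $\tilde\beta$ on the overlap via Proposition \ref{prop.relation}, glue with the Mayer--Vietoris Lemma \ref{lem.effmv}, then produce the Galois cochains. But you skip over the step where the real work happens. Lemma \ref{lem.effmv} requires the gluing cochain to be defined on the \emph{full} fiber product $Y^\circ_L\times_{X_L}\widetilde Y^\circ_L$ over the \emph{full} overlap, whereas Proposition \ref{prop.relation} only delivers a $1$-cochain on a Zariski refinement $\coprod U_i$ of that fiber product; such a refinement is not of the product form $(\coprod Y^\circ_i)\times_X(\coprod\widetilde Y^\circ_j)$ that would let you simply refine the two covers separately, as your parenthetical ``restricted to a common refinement of the two covers'' suggests. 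The way to reduce to a single open set $U$ equal to the whole fiber product is to shrink $\widetilde X^\circ$ (to the complement of the closure of the image of the complement of $U$), and this shrinking can delete codimension-$1$ components, destroying the hypothesis that $X^\circ\cup\widetilde X^\circ$ has codimension-$2$ complement --- exactly the property the conclusion must preserve and on which the later steps (purity, Lemma \ref{lem.effAz}, Proposition \ref{prop.descent}) depend. The paper's proof addresses precisely this: it singles out the points $\xi_1,\dots,\xi_N$ of $Y^\circ\times_X\widetilde Y^\circ$ lying over the generic points of the codimension-$1$ components of $X\smallsetminus X^\circ$ and insists that the open set produced by Proposition \ref{prop.relation} contain all of them (and be Galois invariant), so that the subsequent shrinking of $\widetilde X^\circ$ retains those generic points and hence the codimension-$2$ condition. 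Your ``possibly shrunk to keep smoothness of auxiliary covers'' and your closing remark about bookkeeping do not identify this issue, and without it the argument does not go through as written.

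Two smaller points. Your opening purity claim is off: the restriction $\Br(X_{\bar k})\to\Br(X^\circ_{\bar k})$ is injective but not surjective when $X\smallsetminus X^\circ$ contains divisors, so the class of $\beta$ alone need not extend to $X_{\bar k}$; it is only the glued class on the codimension-$2$ complement that purity identifies with a class on $X$ (harmless here, since you use this only as motivation). And for $\hat\delta_g$ your route is more roundabout than necessary: since $\delta_g$ already cobounds $\hat\beta-{}^g\hat\beta$ over the dense open $X^\circ_L$, Lemma \ref{lem.effBring} directly produces a cochain with the right coboundary on a Zariski refinement of $\widehat Y$ over all of $\widehat X$; there is no need to build a second cochain on $\widetilde X^\circ$ and glue, which reintroduces the same refinement-compatibility problems you flag at the end.
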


\begin{proof}
Let $\xi_1$, $\ldots$, $\xi_N$ denote the codimension 1
generic points of
$Y^\circ\times_X\widetilde{Y}^\circ$ whose image in
$\tilde{X}^\circ$ is one of the generic points in $X$
of the codimension 1 irreducible components of
$X\smallsetminus X^\circ$.
We may apply Proposition \ref{prop.relation} to the covering
$Y^\circ\times_X\widetilde{Y}^\circ\to X^\circ\cap \widetilde{X}^\circ$
and insist that one of the open sets that is produced, in
addition to being Galois invariant, contains all the points
above $\xi_1$, $\ldots$, $\xi_N$.
(The field that emerges, enlarged if necessary, is taken as the
field $L$ mentioned in the statement.)
Call the open set $U$.
We replace $\widetilde{X}^\circ$ with the complement of the closure
of the image of the complement of $U$ in
$Y^\circ_L\times_{X_L}\widetilde{Y}^\circ_L$,
and restrict $\widetilde{Y}^\circ$ accordingly.
Now we have $U=Y^\circ_L\times_{X_L}\widetilde{Y}^\circ_L$,
so we may apply Lemma \ref{lem.effmv} to produce
$\hat\beta\in Z^2(Y^\circ_L\amalg \widetilde{Y}^\circ_L\to
X^\circ_L\cup \widetilde{X}^\circ_L,\G_m)$.
We apply Lemma \ref{lem.effBring} to produce
$\hat\delta_g$, which involves replacing
$Y^\circ$ and $\widetilde{Y}^\circ$ by Zariski covers.
\end{proof}

\begin{prop}
\label{prop.descent}
Let $X$ be a smooth projective geometrically irreducible
variety over a number field $k$, given by explicit equations,
let $K$ be a finite Galois extension of $k$,
and assume that $\Pic(X_{\bar k})$ is
torsion-free, generated by finitely many explicitly given divisors,
defined over $K$.
Let $\alpha\in\Br(X_{\bar k})$ be given by means of a
cocycle representative
$\beta\in Z^2(\widehat{Y}_K\to \widehat{X}_K,\G_m)$,
where
$\widehat{Y}_K\to \widehat{X}_K$ is an \'etale cover, with
$\widehat{X}$ an open subscheme of $X$ whose complement has
codimension at least $2$ (or is empty).
Assume given
$\delta^{(g)}\in C^1(\widehat{Y}_K\to \widehat{X}_K,\G_m)$, having
coboundary $\beta-{}^g\beta$, for every $g\in \Gal(K/k)$.
Then there exists an effective computable obstruction
in $H^2(\Gal(K/k),\Pic(X_K))$ to the existence of
$\alpha_0\in\Br(X)$ such that
$\alpha_0$ and $\alpha$ have the same image in $\Br(X_{\bar k})$.
When the obstruction class vanishes, we can effectively
construct a cocycle representative of $\alpha_0|_{\widehat{X}}$
in $Z^2(\widehat{Y}_K\to \widehat{X},\G_m)$ for some
$\alpha_0\in\Br(X)$ satisfying
$(\alpha_0)_K=\alpha$.
\end{prop}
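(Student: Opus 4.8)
The plan is to realize $\alpha_0$ by descending the class of $\beta$ from $\Br(X_K)$ to $\Br(X)$ along the finite Galois \'etale cover $\widehat{X}_K\to\widehat{X}$ with group $\Gamma:=\Gal(K/k)$, working throughout with \v{C}ech cocycles. Write $\alpha_K\in\Br(X_K)=\Br(\widehat{X}_K)$ for the class of $\beta$; the existence of the $\delta^{(g)}$ says precisely that $\alpha_K\in\Br(X_K)^{\Gamma}$. For each pair $(g,h)\in\Gamma\times\Gamma$ I form
$$
c(g,h):=\delta^{(g)}\cdot{}^g\delta^{(h)}\cdot\bigl(\delta^{(gh)}\bigr)^{-1}\in C^1(\widehat{Y}_K\to\widehat{X}_K,\G_m).
$$
A direct computation with the coboundaries $\beta-{}^g\beta$ shows that $c(g,h)$ is a $1$-cocycle, hence descent data for an invertible sheaf on $\widehat{X}_K$; since $\widehat{X}$ is smooth and its complement in $X$ has codimension $\ge 2$, this sheaf extends uniquely and gives a class $[c(g,h)]\in\Pic(\widehat{X}_K)=\Pic(X_K)$. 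One checks that $(g,h)\mapsto[c(g,h)]$ is a $2$-cocycle for $\Gamma$ acting on $\Pic(X_K)$, and that its class
$$
\omega\in H^2(\Gamma,\Pic(X_K))
$$
is the image of $\alpha_K$ under the differential $d_2$ of the Hochschild--Serre spectral sequence for $\widehat{X}_K\to\widehat{X}$; in particular it is independent of the choice of $\beta$ in its class and of the $\delta^{(g)}$. This $\omega$ is the asserted obstruction, and its necessity is immediate: if some $\alpha_0\in\Br(X)$ restricts to $\alpha_K$, then $\alpha_K$ lies in the image of the edge map $\Br(X)\to\Br(X_K)^{\Gamma}$, so $d_2(\alpha_K)=\omega=0$.

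For sufficiency I would pass to $\bar k$. Over $\bar k$ the obstruction to lifting $\alpha\in\Br(X_{\bar k})^{G_k}$ to $\Br(X)$ consists a priori of $d_2(\alpha)\in H^2(G_k,\Pic(X_{\bar k}))$ together with a further differential into a subquotient of $H^3(G_k,H^0(X_{\bar k},\G_m))$; but $X$ is projective and geometrically irreducible, so $H^0(X_{\bar k},\G_m)=\bar k^*$, and $H^3(G_k,\bar k^*)=0$ for a number field $k$, whence $d_2(\alpha)$ is a complete obstruction. Since $\alpha$ is the image of $\alpha_K$ and $\Pic(X_{\bar k})$ is torsion free and generated by divisors defined over $K$, the group $\Gal(\bar k/K)$ acts trivially on $\Pic(X_{\bar k})=\Pic(X_K)$ and $H^1(\Gal(\bar k/K),\Pic(X_{\bar k}))=0$, so inflation $H^2(\Gamma,\Pic(X_K))\hookrightarrow H^2(G_k,\Pic(X_{\bar k}))$ is injective and carries $\omega=d_2(\alpha_K)$ to $d_2(\alpha)$. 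Hence $\omega=0$ if and only if $d_2(\alpha)=0$ if and only if an $\alpha_0$ as required exists; if $\omega\neq 0$ there is none.

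The effectivity then breaks into three routine-but-laborious parts. First, recognising the invertible sheaf attached to each $1$-cocycle $c(g,h)$ as an explicit integral combination of the given generators of $\Pic(X_K)$: produce a rational section, read off its divisor, extend it across the codimension-$2$ locus, and solve the resulting linear system, all with the effective divisor arithmetic of \cite[\S 4,\S 7]{KTeff}. Second, computing $H^2(\Gamma,\Pic(X_K))$, with its explicit $\Gamma$-action coming from the action on the generating divisors, from the bar resolution (a finite group-cohomology computation), and testing by integer linear algebra whether the image of $\omega$ is a coboundary, producing, when it is, an explicit $\phi\colon\Gamma\to\Pic(X_K)$ with $\partial\phi=\omega$. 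Third, when $\omega=0$: lift each $\phi(g)$ to explicit descent data $e^{(g)}\in Z^1(\widehat{Y}_K\to\widehat{X}_K,\G_m)$ (again via \cite[\S 7]{KTeff}), replace $\delta^{(g)}$ by $\delta^{(g)}(e^{(g)})^{-1}$ so that the sheaves $[c(g,h)]$ become trivial, trivialise the remaining $\G_m$-valued cochains by Lemma \ref{lem.cobounda} and iterate so that, after a Zariski refinement of $\widehat{Y}_K$ if needed, the corrected $\delta^{(g)}$ satisfy $\delta^{(g)}\cdot{}^g\delta^{(h)}=\delta^{(gh)}$ on the nose, and finally glue $\beta$ and the corrected $\delta^{(g)}$ into a $2$-cocycle in $Z^2(\widehat{Y}_K\to\widehat{X},\G_m)$ by an explicit formula in the spirit of Lemma \ref{lem.effmv}, now for a $\Gamma$-Galois cover rather than for a Mayer--Vietoris decomposition. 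Its class is an $\alpha_0\in\Br(\widehat{X})=\Br(X)$ with $(\alpha_0)_K=\alpha_K$.

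The step I expect to require the most care is the last one: that once $\omega$ has been written as a coboundary in $H^2(\Gamma,\Pic(X_K))$ the cochains $\delta^{(g)}$ can genuinely be corrected so as to form a $\Gamma$-cocycle with values in $C^1(\widehat{Y}_K\to\widehat{X}_K,\G_m)$ --- i.e.\ that the residual obstruction supported on $\G_m(\widehat{Y}_K)/K^*$ vanishes. It is exactly the reduction to $\bar k$ of the second paragraph, together with $H^3(G_k,\bar k^*)=0$ and the inflation injectivity forced by torsion-freeness of the Picard group, that certifies this residue to be a coboundary, so that the trivialisation procedure is bound to terminate; the verification that the glued $2$-cochain satisfies the cocycle identity is then a mechanical check.
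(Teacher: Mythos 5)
Your proposal is correct and follows essentially the same route as the paper: the obstruction cocycle $\delta^{(g)}\,{}^g\delta^{(h)}/\delta^{(gh)}$ viewed as a line bundle class (extended across the codimension-$2$ locus), its identification with the Hochschild--Serre differential into $H^2(\Gal(K/k),\Pic(X_K))$, the inflation injectivity from torsion-freeness of $\Pic(X_{\bar k})$, and the residual $H^3$ obstruction in $K^*$ disposed of by passing to a larger field (the paper tests $H^3(\Gal(K/k),K^*)$ at finite level and enlarges $K$ if needed, which is the finite-level incarnation of your $H^3(G_k,\bar k^*)=0$), followed by the explicit gluing of $\beta$ with the corrected cochains into a cocycle for $\widehat{Y}_K\to\widehat{X}$.
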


\begin{proof}
By the Leray spectral sequence, we have an exact sequence
$$\Br(X)\to \ker\left(\Br(X_K)^{\mathrm{\Gal(K/k)}}\to H^2(\Gal(K/k),\Pic(X_K))\right)
\to H^3(\Gal(K/k),K^*).$$
Also note that the nontriviality in $H^2(\Gal(K/k),\Pic(X_K))$
implies the nontriviality in $H^2(\Gal(L/k),\Pic(X_L))$ for any
finite extension $L$ of $K$, Galois over $k$,
by the Hochschild-Serre spectral sequence
\begin{align*}
0=H^1(\Gal(L/K),\Pic(X_L))^{\Gal(K/k)}&\to \\
H^2(\Gal(K/k),&\Pic(X_K))\to
H^2(\Gal(L/k),\Pic(X_L)).
\end{align*}

The hypothesis concerning $\delta^{(g)}$ may be written
\begin{equation}
\label{eq.b1}
\frac{\delta^{(g)}(y,y')\delta^{(g)}(y',y'')}{\delta^{(g)}(y,y'')}=
\frac{\beta(y,y',y'')}{{}^g\beta(y,y',y'')}
\end{equation}
and implies that
\begin{equation}
\label{eq.c1}
\frac{\delta^{(g)}\,{}^g\delta^{(g')}}{\delta^{(gg')}}
\in Z^1(\widehat{Y}_K\to \widehat{X}_K,\G_m)
\end{equation}
for every $g$, $g'\in\Gal(K/k)$.
Arguments as in \cite[\S 6]{KTeff} show that
\eqref{eq.c1} gives the obstruction class in
$H^2(\Gal(K/k),\Pic(X_K))$.
Of course, each cocycle \eqref{eq.c1} may be
explicitly represented by a divisor, whose class in $\Pic(X_K)$ is then
readily computed.

Assuming that the obstruction class in
$H^2(\Gal(K/k),\Pic(X_K))$ vanishes,
each $\delta^{(g)}$ may be modified by a cocycle so that
each element \eqref{eq.c1} is a coboundary, i.e., so that
there exist
$\varepsilon^{(g,g')}\in \mathcal{O}^*_{\widehat{Y}_K}$ satisfying
\begin{equation}
\label{eq.b2}
\frac{\varepsilon^{(g,g')}(y')}{\varepsilon^{(g,g')}(y)}=
\frac{\delta^{(g)}(y,y'){}^g\delta^{(g')}(y,y')}{\delta^{(gg')}(y,y')}.
\end{equation}
In this case the divisor representative of \eqref{eq.c1} is
a principal divisor, hence the divisor associated to an
effectively computable rational function.

Combining \eqref{eq.b1} and \eqref{eq.b2}, we have
$$\frac{\varepsilon^{(g,g')}(y)\varepsilon^{(gg',g'')}(y)}{\varepsilon^{(g,g'g'')}(y){}^g\varepsilon^{(g',g'')}(y)}=
\frac{\varepsilon^{(g,g')}(y')\varepsilon^{(gg',g'')}(y')}{\varepsilon^{(g,g'g'')}(y'){}^g\varepsilon^{(g',g'')}(y')},$$
hence
\begin{equation}
\label{eq.c2}
\varepsilon^{(g,g')}\varepsilon^{(gg',g'')}/(\varepsilon^{(g,g'g'')}\,{}^g\varepsilon^{(g',g'')})\in \mathcal{O}^*_{\widehat{X}_K},
\end{equation}
i.e., is a constant function,
for every $g$, $g'$, $g''\in\Gal(K/k)$.
The rest of the argument is similar to \cite[Prop.\ 6.3]{KTeff}. The
constants \eqref{eq.c2} determine a class
in $H^3(\Gal(K/k),K^*)$, which may be effectively tested for vanishing.
In case of nonvanishing a further finite extension may be effectively
computed, which kills this class.
In case of vanishing, a $2$-cochain lift is effectively produced.
Modifying $\varepsilon^{(g,g')}$, then, yields
\begin{equation}
\label{eq.b3}
\varepsilon^{(g,g')}(y)\varepsilon^{(gg',g'')}(y)=
\varepsilon^{(g,g'g'')}(y){}^g\varepsilon^{(g',g'')}(y)
\end{equation}
Now if we set
$$\beta^{(g,g')}(y,y',y'')=
\frac{\beta(y,',y'')\varepsilon^{(g,g')}(y'')}{\delta^{(g)}(y',y'')}$$
then we have
$$\beta^{(g,g')}(y,y',y'')\beta^{(gg',g'')}(y,y'',y''')=
\beta^{(g,g'g'')}(y,y',y'''){}^g\beta^{(g',g'')}(y',y'',y'''),$$
i.e., we have an element of $Z^2(\widehat{Y}_K\to \widehat{X},\G_m)$
determining an element $\alpha_0\in H^2(X,\G_m)$.
The restriction to $\widehat{X}_K$ is defined by the cocycle
$\beta^{(e,e)}$, which is equal to $\beta$, up to coboundary.
\end{proof}

\end{document}